\newcolumntype{V}{!{\vrule width 2pt}}
\numberwithin{equation}{section}
\def\blue{\textcolor{blue}}
\def\red{\textcolor{red}}
\def\magenta{\textcolor{magenta}}
\theoremstyle{plain}
\newtheorem{theorem}{Theorem}[section]
\newtheorem{lemma}[theorem]{Lemma}
\newtheorem{proposition}[theorem]{Proposition}
\theoremstyle{definition}
\newtheorem{Def}[theorem]{Definition}
\newtheorem{example}[theorem]{Example}
\newtheorem{conj}[theorem]{Conjecture}
\newtheorem{remark}[theorem]{Remark}
\newtheorem{?}[theorem]{Problem}
\def\des{\mathsf{des}}
\def\S{\mathfrak{S}}
\def\iar{\mathsf{iar}}
\def\lir{\mathsf{lir}}
\def\comp{\mathsf{comp}}
\def\DES{\mathrm{DES}}
\def\LMAX{\mathrm{LMAX}}
\def\LMIN{\mathrm{LMIN}}
\def\LMIN{\mathrm{LMIN}}
\def\st{\mathsf{st}}
\newcommand{\id}{\mathrm{id}}
\def\DESB{\mathrm{DESB}}
\def\rc{\mathrm{rc}}
\def\id{\mathrm{id}}
\def\DESB{\mathrm{DESB}}
\newcommand{\DT}{\mathfrak{DT}}
\def\iop{\mathsf{iop}}
\def\top{\mathsf{top}}
\def\riop{\mathsf{riop}}
\def\rtop{\mathsf{rtop}}
\def\pop{\mathsf{pop}}
\def\lop{\mathsf{lop}}
\def\rpop{\mathsf{rpop}}
\def\rlop{\mathsf{rlop}}
\def\iom{\mathsf{iom}}
\def\tom{\mathsf{tom}}
\def\riom{\mathsf{riom}}
\def\rtom{\mathsf{rtom}}
\def\pom{\mathsf{pom}}
\def\lom{\mathsf{lom}}
\def\rpom{\mathsf{rpom}}
\def\rlom{\mathsf{rlom}}
\def\omi{\mathsf{omi}}
\def\idr{\mathsf{idr}}
\begin{document}

\title[A combinatorial bijection on di-sk trees]{A combinatorial bijection on di-sk trees}

\author[S. Fu]{Shishuo Fu}
\address[Shishuo Fu]{College of Mathematics and Statistics, Chongqing University, Huxi campus, Chongqing 401331, P.R. China}
\email{fsshuo@cqu.edu.cn}

\author[Z. Lin]{Zhicong Lin}
\address[Zhicong Lin]{Research Center for Mathematics and Interdisciplinary Sciences, Shandong University, Qingdao 266237, P.R. China}
\email{linz@sdu.edu.cn}

\author[Y. Wang]{Yaling Wang}
\address[Yaling Wang]{School of Mathematical Sciences, Dalian University of Technology, Dalian 116024, P.R. China}
\email{wyl032021@163.com}

\date{\today}

\begin{abstract}
A di-sk tree is a rooted binary tree whose nodes are labeled by $\oplus$ or $\ominus$, and no node has the same label as its right child. The di-sk trees are in natural bijection with separable permutations. We construct a combinatorial bijection on di-sk trees proving  the two quintuples $(\LMAX,\LMIN,\DESB,\iar,\comp)$ and $(\LMAX,\LMIN,\DESB,\comp,\iar)$ have the same distribution over separable permutations. Here for a permutation $\pi$, $\LMAX(\pi)/\LMIN(\pi)$ is the set of values of the left-to-right maxima/minima of $\pi$ and $\DESB(\pi)$ is the set of descent bottoms of $\pi$, while $\comp(\pi)$ and $\iar(\pi)$ are respectively  the number of components of $\pi$ and the length of initial ascending run of $\pi$. 

Interestingly, our bijection specializes to a bijection on $312$-avoiding permutations, which provides  (up to the classical {\em Knuth--Richards bijection}) an alternative approach to a result of Rubey (2016) that asserts the  two triples $(\LMAX,\iar,\comp)$ and $(\LMAX,\comp,\iar)$ are equidistributed  on $321$-avoiding permutations. Rubey's result is a symmetric extension of an equidistribution due to Adin--Bagno--Roichman, which implies the class of $321$-avoiding permutations with a prescribed number of components is Schur positive. 

Some equidistribution results for various statistics concerning tree traversal are presented in the end.

\end{abstract}


\keywords{Schr\"oder number; di-sk trees; separable permutations; Comtet statistics; bijective proof.}

\maketitle


\section{Introduction}\label{sec1: intro}

The {\em (large) Schr\"oder numbers } 
$$
S_n=\sum_{i=0}^n\frac{1}{i+1}\binom{2i}{i}\binom{n+i}{n-i}
$$
are one of the most fundamental integer sequences in mathematics. They arise in many classical combinatorial enumeration problems~\cite{kit,shapi2,stan}. In pattern avoidance, one of  the most important classes of permutations, known as {\em separable permutations}, are counted by the Schr\"oder numbers~\cite{shapi,wes,sk}. 

There are two distinct ways to define the separable permutations. One is in terms of pattern avoiding permutations. Let $\S_n$ be the set of permutations of $[n]:=\{1,2,\ldots,n\}$. A permutation $\pi\in\S_n$ is said to {\em avoid} a permutation (or a pattern) $\sigma\in\S_k$, $k\leq n$, if there exists no subsequence of $\pi$ that is order isomorphic to $\sigma$. Separable permutations are permutations that avoid both the patterns $2413$ and $3142$. 

Another description of separable permutations is via two elementary  operations, called direct sum and skew sum of permutations. 
The \emph{direct sum} $\pi\oplus\sigma$ and the \emph{skew sum} $\pi\ominus\sigma$, of $\pi\in\S_k$ and $\sigma\in\S_l$, are permutations in $\S_{k+l}$ defined respectively as
$$
(\pi\oplus\sigma)_i=
\begin{cases}
\pi_i, &\text{for $1\leq i\leq k$};\\
\sigma_{i-k}+k, &\text{for $k+1\leq i\leq k+l$}.
\end{cases}
$$
and
$$
(\pi\ominus\sigma)_i=
\begin{cases}
\pi_i+l, &\text{for $1\leq i\leq k$};\\
\sigma_{i-k}, &\text{for $k+1\leq i\leq k+l$}.
\end{cases}
$$
 For instance, we have $123\oplus 21=12354$ and $123\ominus 21=34521$. The following  characterization of separable permutations is  folkloric  (see~\cite[p.~57--58]{kit}) in pattern avoidance. 
 
 \begin{proposition}\label{desides}
 A permutation is separable if and only if it can be built from the permutation $1$ by applying the operations $\oplus$ and $\ominus$ repeatedly. 
 \end{proposition}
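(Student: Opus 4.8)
The plan is to prove the equivalence in Proposition~\ref{desides} by establishing both directions, where the substance lies in showing that any separable (i.e.\ $\{2413,3142\}$-avoiding) permutation admits a decomposition via $\oplus$ and $\ominus$.

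The easy direction is closure: I would first check that the single-element permutation $1$ is separable and that the classes of $\oplus$-decomposable and $\ominus$-decomposable permutations built from $1$ remain separable. Concretely, I would verify that if $\pi$ and $\sigma$ both avoid $2413$ and $3142$, then so do $\pi\oplus\sigma$ and $\pi\ominus\sigma$. The key observation is that in $\pi\oplus\sigma$ every entry coming from the $\pi$-block is smaller than and to the left of every entry coming from the $\sigma$-block, so any occurrence of a pattern must either lie entirely within one block or have its entries split across the two blocks in a monotone-increasing fashion between blocks. Since the forbidden patterns $2413$ and $3142$ each contain a ``descent across the middle'' in value (both start with a value-pair that is incompatible with the globally increasing block structure), no occurrence can be split across the two blocks; hence any occurrence would already live in $\pi$ or in $\sigma$, contradicting their separability. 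The identical argument with inequalities reversed handles $\ominus$. This gives the ``if'' direction by induction on the length.

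For the ``only if'' direction I would argue by strong induction on $n$, the length of a separable permutation $\pi\in\S_n$. The heart of the matter is to produce a nontrivial decomposition $\pi=\alpha\oplus\beta$ or $\pi=\alpha\ominus\beta$ with both factors shorter than $\pi$; the inductive hypothesis then expresses $\alpha$ and $\beta$ in terms of $1$, $\oplus$, $\ominus$, and we are done. The natural approach is to locate a \emph{cut point}: an index $k$ with $1\le k<n$ such that $\{\pi_1,\dots,\pi_k\}=\{1,\dots,k\}$ (yielding a direct sum) or such that $\{\pi_1,\dots,\pi_k\}=\{n-k+1,\dots,n\}$ (yielding a skew sum). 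The main obstacle, and the crux of the proof, is to show that at least one such cut point always exists when $\pi$ avoids both $2413$ and $3142$. I expect to prove this by contradiction: assuming no cut of either type exists, the prefix value-sets $\{\pi_1,\dots,\pi_k\}$ are never intervals of the form $[1,k]$ or $[n-k+1,n]$, and I would track how the ``frontier'' of occupied values interleaves small and large entries as $k$ grows. A careful bookkeeping argument shows that such persistent interleaving forces four positions realizing the pattern $2413$ or $3142$, contradicting separability. This existence-of-a-cut step is where essentially all the combinatorial work concentrates.

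Finally, I would assemble the two directions into the stated equivalence and remark that the recursive decomposition is canonical enough to be recorded as a tree, foreshadowing the di-sk tree encoding used throughout the paper; the labels $\oplus$ and $\ominus$ at the internal nodes simply record which operation was applied at each stage of the decomposition. Since the proposition is folkloric, I would keep the writeup brief and cite~\cite[p.~57--58]{kit}, presenting the cut-point argument as the only nonroutine ingredient.
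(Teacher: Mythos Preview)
The paper does not actually prove Proposition~\ref{desides}; it merely records it as folkloric and cites~\cite[p.~57--58]{kit}. So there is no proof in the paper to compare against. Your outline is the standard argument and is correct. In fact, the paper tacitly relies on precisely the cut-point you isolate: when defining the bijection $\eta$ in Section~2 it appeals to Proposition~\ref{desides} to guarantee that a greatest index $i$ satisfying~\eqref{eq:greind} exists for every separable permutation of length $\geq 2$. Thus your sketch not only supplies the omitted proof but also makes explicit the mechanism on which the subsequent di-sk tree encoding rests; if you do write it up, matching the paper's spirit would mean keeping it to a short remark with the citation, since the paper clearly intends to take the result for granted.
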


This characterization induces a natural bijection~\cite{shapi} between separable permutations and di-sk trees, where a {\em di-sk tree} is a rooted binary tree whose nodes are labeled by $\oplus$ or $\ominus$, and no node has the same label as its right child. The trees considered in this paper will all be di-sk trees. The main objective of this paper is to construct a combinatorial bijection on di-sk trees that proves a symmetric quintuple equidistribution on separable permutations involving two Comtet statistics \cite{flw}, the number of components and the length of the initial ascending run.  

For a permutation $\pi=\pi_1\pi_2\cdots\pi_n\in\S_n$, define six statistics
\begin{align*}
\LMAX(\pi) &:=\{\pi_i:\pi_j<\pi_i,\: \forall 1\le j < i\};\\
\LMIN(\pi) &:=\{\pi_i:\pi_j>\pi_i,\: \forall 1\le j < i\};\\
\DES(\pi)&:=\{i\in[n-1]:\pi_i<\pi_{i+1}\};\\
\DESB(\pi)&:=\{\pi_{i+1}: i\in\DES(\pi)\};\\
\iar(\pi)&:=\min(\DES(\pi)\cup\{n\});\\
\comp(\pi)&:=|\{i: \forall j\leq i,\: \pi_j\leq i\}|;
\end{align*}
called {\em the set of values of  left-to-right maxima}, {\em the set of values of  left-to-right minima}, {\em the set of positions of descents}, {\em the set of descent bottoms}, {\em the length of initial ascending  run} and {\em the number of components} of $\pi$, respectively.  For  a (finite) collection of patterns $P$, we write $\S_n(P)$ for the set of all permutations in $\S_n$ that avoid simultaneously every pattern contained in $P$. 

\begin{theorem}\label{thm:sep:sym}
There exists an involution $\Phi$  on $\S_n(2413,3142)$ that preserves the triple of set-valued statistics $(\LMAX,\LMIN,\DESB)$ but exchanges the pair $(\comp,\iar)$. Moreover, $\Phi$ restricts to an involution on $\S_n(312)$.
\end{theorem}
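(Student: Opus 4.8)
The plan is to exploit the bijection between separable permutations and di-sk trees, and to build $\Phi$ as a recursively-defined operation on trees that is then transported back to permutations. The key is to understand how the five statistics $\LMAX,\LMIN,\DESB,\comp,\iar$ are read off from a di-sk tree. I expect that $\comp$ and $\iar$ admit clean recursive descriptions in terms of the root label and the left/right subtrees: since a permutation decomposes as $\pi=\alpha\oplus\beta$ or $\pi=\alpha\ominus\beta$ according to the root label, the number of components $\comp$ should add across an $\oplus$-root (and collapse to $1$ across an $\ominus$-root), while the initial ascending run $\iar$ should behave dually, adding across the left spine of $\oplus$-nodes and being controlled by the leftmost structure. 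The three preserved statistics $\LMAX,\LMIN,\DESB$ should likewise have recursive descriptions compatible with these sum/skew-sum decompositions, so that a tree operation which leaves them fixed can be isolated.

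The main construction will be an involution $\Phi$ on di-sk trees that performs a local surgery exchanging the roles played by $\comp$ and $\iar$. Concretely, I would first identify the \emph{canonical decomposition} of a separable permutation recording all its components (the maximal sum-decomposition) and, dually, the structure governing the initial ascending run; on the tree side these correspond to two distinguished paths, roughly the right spine of $\oplus$-nodes (which encodes $\comp$) and the left spine (which encodes $\iar$). The involution $\Phi$ should swap these two spine-encodings while preserving the ``interior'' of the tree that carries the set-valued data $\LMAX,\LMIN,\DESB$. I would define $\Phi$ recursively: fix the root label and apply a transformation that transfers a portion of one spine to the other, then recurse on the appropriately modified subtrees, taking care that the transformation is its own inverse.

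First I would set up the recursive formulas for all five statistics under $\oplus$ and $\ominus$, checking for instance that $\DESB(\alpha\ominus\beta)$ decomposes as a shifted union coming from $\DESB(\alpha)$, $\DESB(\beta)$, and a new descent bottom created at the junction, and similarly for $\LMAX,\LMIN$. Next I would use these formulas to pin down exactly which part of the tree must be rearranged to swap $(\comp,\iar)$ without disturbing $(\LMAX,\LMIN,\DESB)$, and define $\Phi$ on that part. Then I would verify by induction on tree size that $\Phi$ preserves the triple and exchanges the pair, and separately that $\Phi\circ\Phi=\id$. Finally, to obtain the $\S_n(312)$ restriction, I would observe that $312$-avoidance corresponds to a structural constraint on di-sk trees (forbidding certain $\ominus$-configurations), and check that $\Phi$ respects this constraint so that it restricts to an involution on the smaller class.

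The hard part will be designing the tree surgery so that it simultaneously (i) is an involution, (ii) genuinely exchanges $\comp$ and $\iar$ rather than merely permuting them among several cases, and (iii) leaves $\LMAX,\LMIN,\DESB$ untouched. The tension is that $\comp$ and $\iar$ are ``global'' spine statistics while $\LMAX,\LMIN,\DESB$ are distributed throughout the tree, so the surgery must be surgically local on the spines yet provably invisible to the set-valued statistics; verifying the involution property through the recursion, especially at the base cases and at roots where the label forces $\comp=1$ or $\iar=1$, is where the delicate bookkeeping will lie.
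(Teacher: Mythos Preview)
Your overall strategy---translate to di-sk trees via $\eta$, identify tree statistics corresponding to $\comp$ and $\iar$, perform a tree surgery, and check the $312$-restriction as a tree constraint---is exactly the paper's. But the concrete picture you sketch is off in a way that matters. Under the paper's $\eta$, when $\pi=\omega\oplus\rho$ the \emph{left} subtree is $\eta(\omega)$, so $\comp(\pi)-1$ is read not from a ``right spine'' but as $\top(T)$, the number of consecutive $\oplus$-nodes at the \emph{top} of the spine (the path from the root down to the inorder-first node, which is a left-going path). Meanwhile $\iar(\pi)-1=\iop(T)$ counts initial $\oplus$-nodes in the \emph{inorder} traversal, which starts at the bottom of that same spine but then wanders into right subtrees. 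So the two statistics do not live on two disjoint spines that one can simply swap; they overlap on the same left-path and diverge in a subtle way. A naive ``swap two spines'' surgery will not exchange them.

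The missing ingredient---and it is the heart of the paper's argument---is an \emph{elementary local move} $\mathcal{L}(v,T)$ on di-sk trees (cutting an $\oplus$-node with its right subtree and reinserting it below a nearby $\ominus$-node) together with a lemma showing that $\mathcal{L}$ preserves $(\LMAX,\LMIN,\DESB)$. The paper then builds a bijection $\phi:\{T:\top=k,\iop=l\}\to\{T:\top=k-1,\iop=l+1\}$ as a carefully ordered composite of $\mathcal{L}$'s and $\mathcal{L}^{-1}$'s (a ``swing down'' followed by a ``backward shift''), and finally sets $\Phi=\eta^{-1}\circ\phi^{\,k-l}\circ\eta$ on each stratum; only the degenerate case where the spine is all $\oplus$ (i.e.\ $\pi=1\oplus\sigma$) is handled by straight recursion. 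Your plan has no analogue of $\mathcal{L}$ and no mechanism guaranteeing that the surgery leaves the set-valued statistics fixed, so as written it does not get past the ``hard part'' you yourself flag. The $312$-restriction, on the other hand, you have right in spirit: it corresponds to $\ominus$-nodes having no right child, and the paper checks that $\mathcal{L}$ (hence $\Phi$) preserves this.
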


The inspiration of Theorem~\ref{thm:sep:sym} stems from the work of Comtet~\cite[Ex.~VI.14]{com} and several recent results. The two statistics $\LMAX/\LMIN$ and $\DESB$ are respectively the set-valued extensions of the classical Stirling and Eulerian statistics, since the number of left-to-right maxima/minima over $\S_n$ gives the {\em Stirling numbers of the first kind} and  the descent polynomial on $\S_n$ is the {\em$n$-th Eulerian polynomial} (see~\cite{bon,FS,pet}). Note that  $\comp(\pi)$ equals the maximum number of components in an expression of $\pi$ as a direct sum of permutations~\cite{adin}. The statistic $\comp$ dates back at least to Comtet~\cite[Ex.~VI.14]{com} and following \cite{flw}, any statistic equidistributed with $\comp$ over a class of restricted permutations will be called a {\em Comtet statistic} over such class. The statistic $\iar$ was considered by Claesson and Kitaev in~\cite{CK}, but under the different notation $\lir$. It was known that 
 \begin{equation}\label{eq:cat}
 |\{\pi\in\S_n(321): \iar(\pi)=k\}|=C_{n,n-k}= |\{\pi\in\S_n(321): \comp(\pi)=k\}|,
 \end{equation}
 where $\{C_{n,k}=\frac{n-k}{n}\binom{n-1+k}{k}\}_{0\le k\le n-1}$ forms the {\em Catalan triangle} (see \cite[A009766]{oeis}). Thus, $\iar$ is a Comtet statistic over $321$-avoiding permutations. 
 
 Recently, Adin, Bagno, and Roichman~\cite{adin} proved a generalization of~\eqref{eq:cat}, which is equivalent to the equidistribution of $(\LMAX,\iar)$ and $(\LMAX,\comp)$ on $321$-avoiding permutations. This result was shown to imply that the class of $321$-avoiding permutations with a prescribed number of components is {\em Schur positive}. Rubey~\cite{rub} later found a symmetric generalization of the Adin--Bagno--Roichman equidistribution via constructing an involution on Dyck paths and using Krattenthaler's bijection~\cite{kra} from Dyck paths to $321$-avoiding permutations. His symmetric equidistribution was shown~\cite{flw} to be equivalent to  the equidistribution of $(\LMAX,\iar,\comp)$ and $(\LMAX,\comp,\iar)$ on $321$-avoiding permutations, up to some elementary transformations on permutations. Since the classical {\em Knuth--Richards bijection} (see~\cite{CK}) between $\S_n(321)$ and $\S_n(312)$ preserves the triple $(\LMAX,\iar,\comp)$ and the fact that $\DESB(\pi)\cup\LMAX(\pi)=[n]$ for $\pi\in\S_n(312)$, the two quadruples 
 $$
 (\LMAX,\DESB,\iar,\comp)\quad \text{and}\quad (\LMAX,\DESB,\comp,\iar)
 $$
 have the same distribution over $\S_n(312)$. 
 
 On the other hand, Claesson, Kitaev, and Steingr\'imsson~\cite[Thm~2.2.48]{kit} constructed a bijection between separable permutations of length $n+1$ with $k+1$ components and {\em Schr\"oder paths} of length $2n$ with $k$ horizontals on the $x$-axis. Combining this bijection with the recent work in~\cite[Thm~3.2]{fw} justifies $\iar$ being a Comtet statistic on separable permutations. 
 
 The above results lead us to find Theorem~\ref{thm:sep:sym}. See also~\cite{flw} for other interesting consequences of Theorem~\ref{thm:sep:sym}.  For other studies of pattern avoiding permutations that emphasize bijective maps, the reader is referred to~\cite{CK,EP,LK,SS}. 

The rest of this paper is mainly devoted to the proof of Theorem~\ref{thm:sep:sym}. In the next section, we transform the involved statistics from separable permutations to di-sk trees, and then in section~\ref{sec:constr of phi}, we construct a combinatorial bijection on di-sk trees to build the involution $\Phi$ for Theorem~\ref{thm:sep:sym}. In section~\ref{sec:tree-tra}, we discuss some further results from the perspective of tree traversal, and derive several new Comtet statistics over di-sk trees. Finally, we conclude our paper by posing several questions for further investigation.

%
%
%

\section{From separable permutations to di-sk trees}
The set of all di-sk trees with $n-1$ nodes is denoted as $\DT_n$. For each $T\in\DT_n$, we use the \emph{inorder} (traversal) to compare nodes on $T$:  starting with the root node, we recursively traverse the left subtree to the parent then to the right subtree if any (see the first tree in Fig.~\ref{8traversal}). We call the first (by inorder) node of $T$ the \emph{inorder root} (abbreviated as {\em iroot} in the sequel) of $T$.

We will apply a natural  bijection $\eta:\S_n(2413,3142)\rightarrow \DT_n$ found by Shapiro and Stephens~\cite{shapi}. The recursive description of $\eta$ recalled below is from~\cite{flz}. Let $\id_1=1$ be the unique permutation of length $1$ and we set $\eta(\id_1)=\emptyset$. For $\pi=\pi_1\ldots\pi_n\in\S_n(2413,3142)$ with $n\geq2$,  find the greatest index $i\in[n-1]$ such that either 
 \begin{equation}\label{eq:greind}
 \min\{\pi_1,\ldots,\pi_i\}>\max\{\pi_{i+1},\ldots,\pi_n\}\quad\text{or}\quad\max\{\pi_1,\ldots,\pi_i\}<\min\{\pi_{i+1},\ldots,\pi_n\}.
 \end{equation}
 In view of Proposition~\ref{desides}, such an index $i$ exists and is unique. We distinguish two cases:
 \begin{itemize}
 \item If the first inequality in \eqref{eq:greind} holds, then $\pi=\omega\ominus\rho$ with $\omega=(\pi_1+i-n)\cdots(\pi_i+i-n)\in\S_i(2413,3142)$ and $\rho=\pi_{i+1}\cdots\pi_{n}\in\S_{n-i}(2413,3142)$. Define $\eta(\pi)=(\eta(\omega),\ominus,\eta(\rho))$, the tree with the left subtree $\eta(\omega)$ and the right subtree $\eta(\rho)$ attached to the root $\ominus$. 
 \item Otherwise,  $\pi=\omega\oplus\rho$, where $\omega=\pi_1\cdots\pi_i\in\S_i(2413,3142)$ and $\rho=(\pi_{i+1}-i)\cdots(\pi_{n}-i)\in\S_{n-i}(2413,3142)$. Then define $\eta(\pi)$ to be the tree with the left subtree $\eta(\omega)$ and the right subtree $\eta(\rho)$ attached to the root $\oplus$.
\end{itemize}
See Fig.~\ref{di-sk} for an example of $\eta$ for $\pi=5\,2\,3\,4\,1\,9\,11\,10\,6\,8\,7\in\S_{11}(2413,3142)$. One of the important features of $\eta$ was proved in~\cite{flz}.

%
%
%
%

 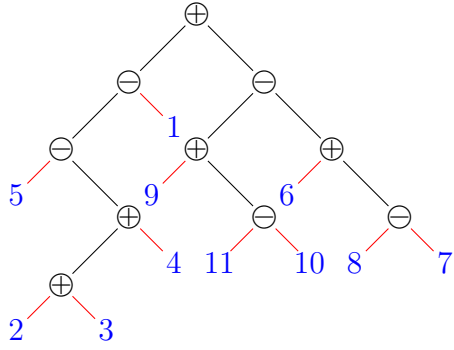
\begin{figure}
\begin{tikzpicture}[scale=0.3]
\draw[-] (17,10) to (19,12);
\draw[-] (20,12) to (22,10);
\draw[-] (22,9) to (20,7);
\draw[-] (23,9) to (25,7);
\draw[-] (20,6) to (22,4);
\draw[-] (26,6) to (28,4);

\draw[-] (16,9) to (14,7);
\node at (13.5,6.5) {$\ominus$};
\red{\draw[-] (13,6) to (12,5);}
\blue{\node at (11.5,4.5) {$5$};}

\draw[-] (14,6) to (16,4);
\node at (16.5,3.5) {$\oplus$};
\red{\draw[-] (17,3) to (18,2);}
\blue{\node at (18.5,1.5) {$4$};}

\draw[-] (16,3) to (14,1);
\node at (13.5,0.5) {$\oplus$};
\red{\draw[-] (13,0) to (12,-1);}
\blue{\node at (11.5,-1.5) {$2$};}
\red{\draw[-] (14,0) to (15,-1);}
\blue{\node at (15.5,-1.5) {$3$};}

\node at (16.5,9.5) {$\ominus$};
\red{\draw[-] (17,9) to (18,8);}
\blue{\node at (18.5,7.5) {$1$};}

\node at (19.5,12.5) {$\oplus$};
\node at (22.5,9.5) {$\ominus$};
\node at (19.5,6.5) {$\oplus$};
\red{\draw[-] (19,6) to (18,5);}
\blue{\node at (17.5,4.5) {$9$};}

\node at (25.5,6.5) {$\oplus$};
\red{\draw[-] (25,6) to (24,5);}
\blue{\node at (23.5,4.5) {$6$};}

\node at (22.5,3.5) {$\ominus$};
\red{\draw[-] (22,3) to (21,2);}
\blue{\node at (20.5,1.5) {$11$};}
\red{\draw[-] (23,3) to (24,2);}
\blue{\node at (24.5,1.5) {$10$};}

\node at (28.5,3.5) {$\ominus$};
\red{\draw[-] (29,3) to (30,2);}
\blue{\node at (30.5,1.5) {$7$};}
\red{\draw[-] (28,3) to (27,2);}
\blue{\node at (26.5,1.5) {$8$};}
\end{tikzpicture}
\caption{The bijection $\eta:\S_n(2413,3142)\rightarrow \DT_{n}$.\label{di-sk}}
\end{figure}

\begin{lemma}[Theorem~2.3 in~\cite{flz}]
\label{lem:sep1}
The mapping 
 $\eta: \S_n(2413,3142)\rightarrow\DT_{n}$ is a bijection such that 
 \begin{align}\label{minu:des}
 i\in \DES(\pi) \,\,\Longleftrightarrow  \text{ the $i$th node (by inorder) of $\eta(\pi)$ is $\ominus$}
 \end{align}
 for each $\pi\in\S_n(2413,3142)$.
\end{lemma}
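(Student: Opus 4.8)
The plan is to proceed by induction on $n$, leaning on the recursive definition of $\eta$ and on the elementary fact that an inorder traversal visits the entire left subtree, then the root, then the entire right subtree. The base case $n=1$ is immediate: $\S_1(2413,3142)=\{1\}$, $\DES(1)=\emptyset$, and $\eta(1)=\emptyset$ carries no nodes, so \eqref{minu:des} holds vacuously. For the inductive step write $\pi=\omega\star\rho$ with $\star\in\{\oplus,\ominus\}$ the operation dictated by the greatest admissible index $i$ in \eqref{eq:greind}, where $\omega\in\S_k(2413,3142)$ and $\rho\in\S_{n-k}(2413,3142)$ with $k=i$.

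First I would verify that $\eta$ actually lands in $\DT_n$, i.e.\ that the output is a legitimate di-sk tree. By induction $\eta(\omega)$ and $\eta(\rho)$ are di-sk trees, so the only new condition to check is that the root does not repeat its label at its right child. Here the maximality of $i$ is decisive. If $\star=\ominus$ and the top-level split of $\rho$ were again a skew sum $\rho=\rho'\ominus\rho''$, then by associativity of $\ominus$ we could rewrite $\pi=(\omega\ominus\rho')\ominus\rho''$, a skew split of $\pi$ occurring at the strictly larger index $k+|\rho'|$ where the first inequality of \eqref{eq:greind} still holds, contradicting the choice of $i$. Hence the root of $\eta(\rho)$ is $\oplus$ (or $\rho$ is a single letter and there is no right child), so no $\ominus$-node sits above an $\ominus$ right child; the case $\star=\oplus$ is symmetric, using associativity of $\oplus$.

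Bijectivity I would establish by exhibiting the inverse. Given $T\in\DT_n$, its root label selects $\oplus$ or $\ominus$, and decoding the left and right subtrees recursively produces separable permutations $\omega,\rho$ which we recombine via the chosen operation; a node count (a length-$m$ permutation yields $m-1$ nodes, so $\eta(\pi)$ has $(k-1)+(n-k-1)+1=n-1$ nodes) confirms that the two sides have equal cardinality. The di-sk condition guarantees that the split so recovered is precisely the maximal-index split employed by $\eta$, whence the two maps are mutually inverse. For property \eqref{minu:des}, note that in inorder the $k-1$ nodes of $\eta(\omega)$ fill positions $1,\dots,k-1$, the root occupies position $k$, and the $n-k-1$ nodes of $\eta(\rho)$ fill positions $k+1,\dots,n-1$. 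Because $\omega$ is order isomorphic to $\pi_1\cdots\pi_k$ and $\rho$ to $\pi_{k+1}\cdots\pi_n$, the descents of $\pi$ interior to each block agree with those of $\omega$ and $\rho$, which the induction hypothesis matches with the $\ominus$-nodes of the respective subtrees. The lone boundary position $k$ is settled by the definition of the two sums: a skew sum forces $\pi_k>\pi_{k+1}$, so $k\in\DES(\pi)$ matches the root label $\ominus$, while a direct sum forces $\pi_k<\pi_{k+1}$, so $k\notin\DES(\pi)$ matches the root label $\oplus$. This closes the induction.

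I expect the genuine obstacle to be the well-definedness step: teasing the di-sk constraint out of the maximality of $i$ through the associativity of $\oplus$ and $\ominus$. Once that structural fact is secured, both the inverse construction and the positional bookkeeping behind \eqref{minu:des} are routine.
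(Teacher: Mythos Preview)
Your inductive argument is correct and complete in outline. The paper itself does not prove this lemma; it simply imports it as Theorem~2.3 of~\cite{flz}, so there is no ``paper's own proof'' to compare against here. Your approach---induction on $n$, using associativity of $\oplus$/$\ominus$ together with the maximality of the split index $i$ to force the di-sk constraint, then reading off \eqref{minu:des} from the inorder decomposition left--root--right---is exactly the natural one and is presumably close to what \cite{flz} does.

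One small point worth making explicit in the bijectivity step: when you assert that ``the di-sk condition guarantees that the split so recovered is precisely the maximal-index split,'' the underlying fact is that a permutation cannot be simultaneously $\oplus$-decomposable and $\ominus$-decomposable (compare $\pi_1$ with $\pi_n$). Thus if the right child of the root is labeled $\ominus$, the permutation $\rho$ it encodes has no direct-sum split at all, and hence $\pi=\omega\oplus\rho$ admits no split at an index exceeding $|\omega|$; the $\ominus$ case is symmetric. With that remark added, your inverse is well defined and your proof is airtight.
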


For each tree $T\in\DT_n$, let $\iop(T)$ be the number of {\em initial $\oplus$-nodes}  (by inorder) in $T$.
It follows from~\eqref{minu:des}  that 
\begin{equation}\label{eq:iop}
\iar(\pi)-1=\iop(\eta(\pi))
\end{equation}
 for any  $\pi\in\S_n(2413,3142)$. 
 But what is $\comp(\pi)$ corresponding to in the di-sk tree $\eta(\pi)$? Let us consider the {\em spine} of $T$, i.e., the path from the root of $T$ to the iroot of $T$. Let $\top(T)$ be the number of top consecutive $\oplus$-nodes in the spine of $T$. For instance, the spine of $T$ in Fig.~\ref{di-sk} is $\oplus-\ominus-\ominus$ (from the top) and so $\top(T)=1$.
 \red{
 }
\begin{lemma}\label{lem:sep2}
For each permutation $\pi\in\S_n(2413,3142)$,  we have  
$$\comp(\pi)-1=\top(\eta(\pi)).
$$
\end{lemma}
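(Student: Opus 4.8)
The plan is to argue by induction on $n$, exploiting the recursive definition of $\eta$ together with the additivity of $\comp$ under direct sums. For the base case $n=1$ we have $\eta(\id_1)=\emptyset$, $\comp(\id_1)=1$, and $\top(\emptyset)=0$ (with the convention that the empty tree has empty spine), so the claimed identity holds.

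Before the inductive step I would record two structural facts. First, $\comp$ is additive: $\comp(\omega\oplus\rho)=\comp(\omega)+\comp(\rho)$, because the maximal direct-sum decomposition of $\omega\oplus\rho$ is the concatenation of those of $\omega$ and $\rho$; in particular $\comp(\rho)=1$ exactly when $\rho$ is $\oplus$-indecomposable, and $\comp(\pi)=1$ exactly when $\pi$ is $\oplus$-indecomposable. Second, a separable permutation of length at least $2$ cannot admit both a direct-sum split and a skew-sum split: comparing the relative order of $\pi_1$ and $\pi_n$ under the two putative splits yields a contradiction. Hence the type ($\oplus$ or $\ominus$) of the top-level decomposition selected by $\eta$ is unambiguous.

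For the inductive step, suppose $n\geq2$ and distinguish the two branches of $\eta$. If the first inequality in \eqref{eq:greind} holds, then $\pi=\omega\ominus\rho$ admits a skew-sum split, so by the second fact it is $\oplus$-indecomposable and $\comp(\pi)=1$; on the tree side the root of $\eta(\pi)$ is $\ominus$, so the spine begins with $\ominus$ and $\top(\eta(\pi))=0=\comp(\pi)-1$. If instead $\pi=\omega\oplus\rho$ with $i$ the greatest valid split index, then maximality of $i$ forces $\rho$ to be $\oplus$-indecomposable (a direct-sum split inside $\rho$ would produce a larger valid split index of $\pi$), whence $\comp(\pi)=\comp(\omega)+1$ by additivity. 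On the tree side, the iroot of $\eta(\pi)$ lies in the left subtree $\eta(\omega)$ (or is the root itself when $\omega=\id_1$ and $\eta(\omega)=\emptyset$), so the spine of $\eta(\pi)$ is the root $\oplus$-node followed by the spine of $\eta(\omega)$; prepending this $\oplus$-node increases the number of top consecutive $\oplus$-nodes by exactly one, giving $\top(\eta(\pi))=\top(\eta(\omega))+1$. Combining this with the inductive hypothesis $\top(\eta(\omega))=\comp(\omega)-1$ yields $\top(\eta(\pi))=\comp(\omega)=\comp(\pi)-1$, completing the induction.

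The main obstacle is the claim that the greatest split index yields an $\oplus$-indecomposable right factor $\rho$; this is precisely what makes $\comp$ drop by exactly one in passing from $\pi$ to $\omega$, matching the single $\oplus$-node inserted at the top of the spine. A secondary, more routine point is the spine relation $\top(\eta(\pi))=\top(\eta(\omega))+1$, which needs a careful (but short) reading of the empty case $\eta(\omega)=\emptyset$ under the convention $\top(\emptyset)=0$.
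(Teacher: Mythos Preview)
Your proof is correct and follows essentially the same inductive strategy as the paper: split on the root label of $\eta(\pi)$, handle the $\ominus$-root case directly, and in the $\oplus$-root case use $\comp(\pi)-1=\comp(\omega)$ together with the spine relation $\top(\eta(\pi))=1+\top(\eta(\omega))$ and the inductive hypothesis. The only cosmetic difference is that the paper derives $\comp(\pi)-1=\comp(\omega)$ from the description $\comp(\pi)-1=|\{k\in[n-1]:\pi_j\le k\ \forall j\le k\}|$, whereas you obtain it via additivity of $\comp$ under $\oplus$ and the $\oplus$-indecomposability of $\rho$; both justifications are valid and amount to the same observation.
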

\begin{proof}
Recall that $\comp(\pi)-1$ equals the cardinality of the set $\{k\in [n-1]: \forall j\leq k,\: \pi_j\leq k\}$. 
We need to consider two cases:
\begin{itemize}
\item If $\{k\in [n-1]: \forall j\leq k,\: \pi_j\leq k\}=\emptyset$, then the root of $\eta(\pi)$ is a $\ominus$-node and so $\top(\eta(\pi))=0=\comp(\pi)-1$. 
\item Otherwise, let $l$ be the greatest integer in $\{k\in [n-1]: \forall j\leq k,\: \pi_j\leq k\}$. Clearly,  $l$ is the greatest index smaller than $n$ such that~\eqref{eq:greind} holds. Thus, by the construction of $\eta$ we have  $\eta(\pi)=(\eta(\omega),\oplus,\eta(\rho))$ assuming that $\pi=\omega\oplus\rho$ with $\omega=\pi_1\cdots\pi_l$. It then follows by induction on $n$ that 
$$
\comp(\pi)-1=\comp(\omega)=1+\top(\omega)=\top(\pi).
$$
\end{itemize}
In either case, the assertion is true. 
\end{proof}
\begin{remark}
The statistic $\top$ on di-sk trees was previously considered by Corteel, Martinez, Savage and Weselcouch in~\cite[Corollary~5]{cor}, where they constructed a bijection from $021$-avoiding inversion sequences of length $n$ with $k$ initial zeros  to $\{T\in\DT_n: \top(T)=k-1\}$. 
On the other hand, Kim and Lin~\cite{LK} built a bijection from $021$-avoiding inversion sequences to $(2413,4213)$-avoiding permutations which transforms positions of ascents to positions of descents. Combining these two bijections gives
\begin{equation}\label{eq:2413-4213}
|\{T\in\DT_n: \top(T)=k-1\}|=|\{\pi\in\S_n(2413,4213): \iar(\pi)=k\}|. 
\end{equation}
It would be interesting to construct a natural bijection (probably in similar flavor  as $\eta$) between $\DT_n$ and $\S_n(2413,4213)$ that proves~\eqref{eq:2413-4213}. 
See also section~\ref{sec:tree-tra} for another interpretation of $\top$ in terms of tree traversal.
\end{remark}

\section{The construction of \texorpdfstring{$\Phi$}{Phi}}\label{sec:constr of phi}

This section is devoted to the construction of $\Phi$. We begin with an elementary operation on di-sk tree that will be used frequently during our construction of $\Phi$. 

Let $T$ be a di-sk tree and $v$ be an $\ominus$-node of $T$. We introduce the di-sk tree $\mathcal{L}(v,T)$ whenever  there exists an $\oplus$-node $w$ ($w$ is not a right child) in the following two situations: 
\begin{itemize}
\item $v$ is the left child of $w$. 
\item $v$ is the right child of an $\oplus$-node, denoted $w'$, whose parent is $w$. 
\end{itemize}
In either case, define $\mathcal{L}(v,T)$ to be the di-sk tree obtained from $T$ by cutting the $\oplus$-node $w$, together with its right subtree (if any), and inserting it as the left child of $v$. The original left child of $v$ (if any) becomes the left child of $w$, while the original right parent of $w$ (if any) becomes the right parent of $v$ (resp.~$w'$) for the first case (resp.~the second case), keeping the remaining nodes and edges of $T$ unchanged. See Fig.~\ref{left-opera} for the illustration of $\mathcal{L}(v,T)$ in the above two cases. Since the edges we have inserted/deleted in the construction of $\mathcal{L}(v,T)$ are all left edges, we see that $\mathcal{L}(v,T)$ is still a di-sk tree. Moreover, both cases are seen to be invertible and if $T':=\mathcal{L}(v,T)$, we will denote the inverse map as $\mathcal{L}^{-1}(v,T')=T$.

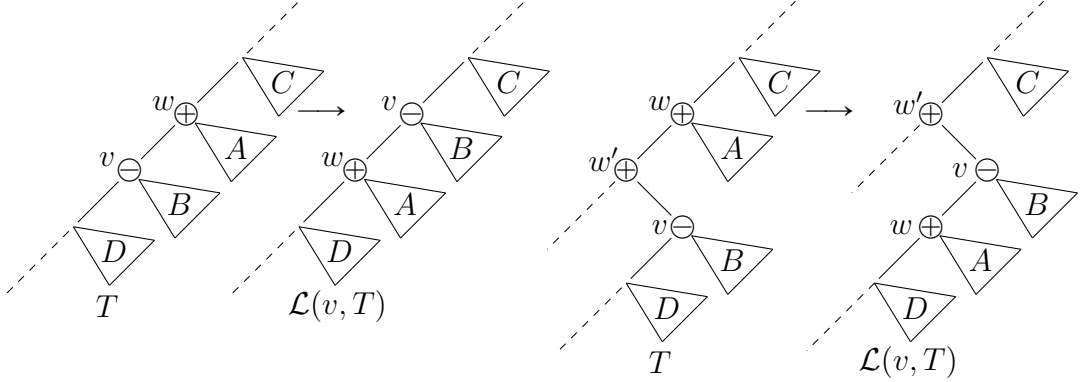
\begin{figure}
\begin{tikzpicture}[scale=0.3]

\draw[-,dashed] (7.7,12.7) to (10,15);\draw[-] (5.5,10.5) to (7.3,12.3);
\draw[-] (7.5,12.5) to (9.1,9.9);\draw[-] (7.5,12.5)  to (11.1,11.9);\draw[-] (9.1,9.9) to (11.1,11.9);
\node at (9.3,11.4) {$C$};
\node at (5,10) {$\oplus$};\node at (4,10.5) {$w$};\draw[-] (4.5,9.5) to (3,8);
\draw[-] (5.4,9.6) to (7,7);\draw[-] (5.4,9.6) to (9,9);\draw[-] (7,7) to (9,9);
\node at (7.2,8.5) {$A$};

\node at (2.5,7.5) {$\ominus$};\node at (1.5,8) {$v$};\draw[-,dashed] (-0.2,4.8) to (-3,2);\draw[-] (0.2,5.2) to (2,7);
\draw[-] (0,5) to (1.6,2.4);\draw[-] (0,5)  to (3.6,4.4);\draw[-] (1.6,2.4) to (3.6,4.4);
\node at (1.8,3.9) {$D$};
\draw[-] (2.9,7.1) to (4.5,4.5);\draw[-] (2.9,7.1) to (6.5,6.5);\draw[-] (4.5,4.5) to (6.5,6.5);
\node at (4.7,6) {$B$};

\node at (1.5,1.5) {$T$};\node at (11.7,1.5) {$\mathcal{L}(v,T)$};
\node at (11,10) {$\longrightarrow$};
\draw[-,dashed] (17.7,12.7) to (20,15);\draw[-] (15.5,10.5) to (17.3,12.3);
\draw[-] (17.5,12.5) to (19.1,9.9);\draw[-] (17.5,12.5)  to (21.1,11.9);\draw[-] (19.1,9.9) to (21.1,11.9);
\node at (19.3,11.4) {$C$};
\node at (15,10) {$\ominus$};\node at (14,10.5) {$v$};\draw[-] (14.5,9.5) to (13,8);
\draw[-] (15.4,9.6) to (17,7);\draw[-] (15.4,9.6) to (19,9);\draw[-] (17,7) to (19,9);
\node at (17.2,8.5) {$B$};

\node at (12.5,7.5) {$\oplus$};\node at (11.5,8) {$w$};\draw[-,dashed] (9.8,4.8) to (7,2);\draw[-] (10.2,5.2) to (12,7);
\draw[-] (10,5) to (11.6,2.4);\draw[-] (10,5)  to (13.6,4.4);\draw[-] (11.6,2.4) to (13.6,4.4);
\node at (11.8,3.9) {$D$};
\draw[-] (12.9,7.1) to (14.5,4.5);\draw[-] (12.9,7.1) to (16.5,6.5);\draw[-] (14.5,4.5) to (16.5,6.5);
\node at (14.7,6) {$A$};

\draw[-,dashed] (29.8,12.8) to (32,15);\draw[-] (27.5,10.5) to (29.3,12.3);
\draw[-] (29.5,12.5) to (31.1,9.9);\draw[-] (29.5,12.5) to (33.1,11.9);\draw[-] (31.1,9.9)to (33.1,11.9);
\node at (31.3,11.4) {$C$};
\node at (27,10) {$\oplus$};\node at (26,10.5) {$w$};\draw[-] (26.5,9.5) to (25,8);
\draw[-] (27.4,9.6) to (29,7);\draw[-] (27.4,9.6) to (31,9);\draw[-] (29,7) to (31,9);
\node at (29.2,8.5) {$A$};

\node at (24.5,7.5) {$\oplus$};\node at (23.5,8) {$w'$};\draw[-,dashed] (24,7) to (21,4);
\draw[-] (25,7) to (26.5,5.5);\node at (27,5) {$\ominus$};\draw[-,dashed] (24.3,2.3) to (21.5,-0.5);
\node at (26,5) {$v$};

\draw[-] (26.5,4.5) to (24.7,2.7);
\draw[-] (24.5,2.5) to (26.1,-0.1);\draw[-] (24.5,2.5) to (28.1,1.9);\draw[-] (26.1,-0.1) to (28.1,1.9);
\node at (26.3,1.4) {$D$};
\draw[-] (27.4,4.6) to (29,2);\draw[-] (27.4,4.6) to (31,4);\draw[-] (29,2) to (31,4);
\node at (29.2,3.5) {$B$};
\node at (26,-1) {$T$};
\node at (33.5,10) {$\longrightarrow$};

\draw[-,dashed] (40.8,12.8) to (43,15);\draw[-] (38.5,10.5) to (40.3,12.3);
\draw[-] (40.5,12.5) to (42.1,9.9);\draw[-] (40.5,12.5) to (44.1,11.9);\draw[-] (42.1,9.9)to (44.1,11.9);
\node at (42.3,11.4) {$C$};
\node at (38,10) {$\oplus$};\node at (37,10.5) {$w'$};\draw[-,dashed] (37.5,9.5) to (34.5,6.5);
\draw[-] (38.5,9.5) to (40,8);\node at (40.5,7.5) {$\ominus$};\node at (39.3,7.5) {$v$};
\draw[-] (40.9,7.1) to (42.5,4.5);\draw[-] (40.9,7.1) to (44.5,6.5);\draw[-] (42.5,4.5) to (44.5,6.5);
\node at (42.7,6) {$B$};

\draw[-] (37.5,4.5) to (35.7,2.7);
\draw[-] (35.5,2.5) to (37.1,-0.1);\draw[-] (35.5,2.5) to (39.1,1.9);\draw[-] (37.1,-0.1) to (39.1,1.9);
\node at (37.3,1.4) {$D$};

\draw[-] (40,7) to (38.5,5.5);\node at (38,5) {$\oplus$};\draw[-,dashed] (35.5,2.5) to (32.5,-0.5);
\node at (36.7,5) {$w$};
\draw[-] (38.4,4.6) to (40,2);\draw[-] (38.4,4.6)  to (42,4);\draw[-] (40,2) to (42,4);
\node at (40.2,3.5) {$A$};
\node at (37,-1) {$\mathcal{L}(v,T)$};

\end{tikzpicture}
\caption{Two cases to obtain the di-sk tree $\mathcal{L}(v,T)$ from $T$.\label{left-opera}}
\end{figure}

The reason to introduce the transformation $\mathcal{L}$ lies in the following lemma. 
\begin{lemma}\label{lem:L}
Let $\pi=\eta^{-1}(T)$ and $\pi'=\eta^{-1}(\mathcal{L}(v,T))$ for a di-sk tree $T$. 
Then, 
\begin{align*}
\DESB(\pi)&=\DESB(\pi'),\\
\LMAX(\pi)&=\LMAX(\pi')\quad{and}\\
\LMIN(\pi)&=\LMIN(\pi').
\end{align*}
\end{lemma}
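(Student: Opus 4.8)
The plan is to establish the three set-equalities by analyzing how the transformation $\mathcal{L}$ rearranges the inorder reading of the tree, and then invoking the known correspondences between tree-data and permutation statistics. The key preliminary observation is that $\mathcal{L}(v,T)$ is engineered to preserve the inorder sequence of node-labels. Tracing through both cases in Fig.~\ref{left-opera}, the inorder traversal visits the subtrees $C$, $A$, $D$, $B$ in exactly the same order before and after the operation, and each node keeps its own $\oplus/\ominus$ label (only the parent-child incidences are rerouted along left edges). Hence for every index $i$, the $i$th node by inorder of $T$ is a $\ominus$-node if and only if the $i$th node by inorder of $\mathcal{L}(v,T)$ is a $\ominus$-node. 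By Lemma~\ref{lem:sep1} (specifically the equivalence~\eqref{minu:des}), this means $\DES(\pi)=\DES(\pi')$. Since $\DESB$ is determined entirely by the descent set together with the underlying values, the first step is to leverage $\DES(\pi)=\DES(\pi')$ to pin down $\DESB$, $\LMAX$, and $\LMIN$.

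First I would prove $\DESB(\pi)=\DESB(\pi')$. Having $\DES(\pi)=\DES(\pi')$ is not by itself enough, since descent bottoms are values $\pi_{i+1}$, not positions. The cleanest route is to recall the standard fact that for a permutation $\pi$, the complement $[n]\setminus\DESB(\pi)$ equals $\LMAX(\pi)$ when $\pi$ avoids $312$, but for general separable permutations one must argue directly. Here I would instead encode each set-statistic as a property of the inorder-labeled tree that depends only on the label sequence and the tree's branching shape in a manner invariant under $\mathcal{L}$. Concretely, I expect that $\DESB$, $\LMAX$, and $\LMIN$ each admit a reading purely from the sequence of labels encountered in inorder together with the values assigned by $\eta^{-1}$; and that the values themselves, read in inorder, are unchanged by $\mathcal{L}$ because $\mathcal{L}$ only reroutes left edges and never alters the relative inorder ranking of nodes. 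Thus the whole \emph{word} $\pi=\eta^{-1}(T)$, as a sequence of values, should coincide with $\pi'=\eta^{-1}(\mathcal{L}(v,T))$ in all positions except possibly a controlled local region, and the three set-statistics are invariant on that region.

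The cleanest and most likely intended argument is therefore to show directly that $\pi$ and $\pi'$ differ only by the local restructuring around the blocks $A$ and $B$, and to verify that none of $\DESB$, $\LMAX$, $\LMIN$ can detect this difference. I would decompose $\pi$ according to the subtrees $C,A,D,B$ in each case of Fig.~\ref{left-opera}, write the corresponding direct/skew-sum factorization using Proposition~\ref{desides} and the recursive definition of $\eta$, and then track exactly which values move. In both cases the operation amounts to reassociating a $\oplus$ and a $\ominus$ joint, which swaps the relative order of two consecutive summand-blocks while preserving each block internally; the point is that the set of left-to-right maxima, the set of left-to-right minima, and the set of descent bottoms are each stable under this particular reassociation. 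I would carry this out as two short case computations (one per case in the figure), exhibiting the explicit one-line forms of $\pi$ and $\pi'$ and reading off the three sets.

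The main obstacle I anticipate is $\LMIN$ and $\LMAX$, rather than $\DESB$. The descent-bottom set follows almost immediately once the value-word is understood locally, but left-to-right extrema are \emph{global} in that they depend on the prefix read so far, so I must confirm that the reassociation around $A,B$ does not create or destroy any left-to-right maximum or minimum further to the right. The delicate point is that in Case~1 the $\ominus$-node $v$ and the $\oplus$-node $w$ trade positions along the spine, which alters where the global minimum/maximum of the affected blocks sits relative to the prefix; I expect that the defining inequalities~\eqref{eq:greind} used in the construction of $\eta$ guarantee that the block $A$ (attached under an $\oplus$) contributes values lying above, and the block $B$ contributes values lying within ranges that make the extrema sets coincide. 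Verifying this monotonicity carefully in both cases is where the real work lies, and it is the step I would write out in full detail.
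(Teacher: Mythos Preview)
Your opening observation is where the argument breaks. You claim that the inorder traversal visits the subtrees $C,A,D,B$ in the same order before and after $\mathcal{L}$, and hence that the inorder label sequence is unchanged, giving $\DES(\pi)=\DES(\pi')$. This is false. In Case~1 of Fig.~\ref{left-opera}, the inorder of $T$ around the affected region reads
\[
\ldots,\ D,\ v(\ominus),\ B,\ w(\oplus),\ A,\ \ldots,\ C,
\]
whereas the inorder of $\mathcal{L}(v,T)$ reads
\[
\ldots,\ D,\ w(\oplus),\ A,\ v(\ominus),\ B,\ \ldots,\ C.
\]
So the blocks $A$ and $B$ (together with the nodes $w$ and $v$) are swapped, and unless $|A|=|B|$ and the label sequences of $A$ and $B$ coincide, the position-by-position label sequence changes. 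A minimal counterexample: take $T$ with root $w=\oplus$ and a single left child $v=\ominus$. Then $\eta^{-1}(T)=213$ with $\DES=\{1\}$, while $\eta^{-1}(\mathcal{L}(v,T))=231$ with $\DES=\{2\}$. Thus $\DES(\pi)\neq\DES(\pi')$ in general, and your route to $\DESB(\pi)=\DESB(\pi')$ via equality of descent \emph{positions} cannot work.

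Your later paragraph, where you say the operation ``swaps the relative order of two consecutive summand-blocks,'' is the correct picture and in fact contradicts your opening claim; the proposal never reconciles the two. The paper's proof makes this swap precise by first describing $\eta^{-1}$ via the \emph{augmented tree}: one attaches $n$ new leaves and assigns the integers $1,\ldots,n$ to them according to the $\oplus/\ominus$ rules. The crucial fact is that $\mathcal{L}$ preserves the integer assigned to each individual leaf (only left edges are rerouted, so the relative position of any leaf with respect to every $\oplus/\ominus$ node is unchanged). Consequently $\pi'$ is obtained from $\pi$ by swapping the contiguous block of values coming from $A$ with the block coming from $B$. For $\DESB$ one then observes that a value is a descent bottom iff its leaf sits immediately after an $\ominus$-node in inorder, a relation that survives the swap. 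For $\LMAX$ and $\LMIN$ one partitions the letters into class~I (from $A$), class~II (from $B$), and class~III (the rest), notes that every class~I value exceeds every class~II value, that no class~II value is a left-to-right maximum, and that no class~I value is a left-to-right minimum; the swap therefore leaves $\LMAX$ and $\LMIN$ intact. Your proposal gestures at a block decomposition but never isolates the leaf-value invariance, which is the missing idea.
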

\begin{proof}
We need to describe the inverse $\eta^{-1}: \DT_{n}  \rightarrow \S_n(2413,3142)$ of $\eta$. For a given di-sk tree $
T\in\DT_n$, let us add some edges to $T$ so that the out-degree of each node is exactly two (see the red edges in Fig.~\ref{di-sk}). There are $n$ new edges to be added to $T$, thus creating $n$ new leaves. The next step of $\eta^{-1}$ is to assign the integers in $[n]$ to these $n$ new leaves so that for each $\oplus$-node (resp.~$\ominus$-node) of $T$, the integers assigned to the leaves belonging to the left subtree of this node are all smaller (resp.~greater) than those assigned to leaves belonging to the right subtree. Such an assignment is unique and the permutation $\eta^{-1}(T)$ can be derived from reading these $n$ integers by the inorder of this augmented tree (see Fig.~\ref{di-sk}).

From the above description of $\eta^{-1}$, we see that the transformation $\mathcal{L}$ preserves the assignment of the augmented tree, namely, if a new leaf of $T$ has been assigned an integer $k$, then the corresponding new leaf (i.e., the leaf of $\mathcal{L}(v,T)$ added to the corresponding node under $\mathcal{
L}$) receives the same integer $k$. Notice that an integer is a descent bottom of $\pi=\eta^{-1}(T)$ if and only if it is assigned to a leaf appearing immediately after an $\ominus$-node by the inorder of the augmented tree of $T$. Thus, $\DESB(\pi)=\DESB(\pi')$ holds. To see that $\LMAX(\pi)=\LMAX(\pi')$, we divide letters assigned to the augmented tree of $T=\eta(\pi)$ (see Fig.~\ref{left-opera}) into three subclasses:
\begin{enumerate}
	\item[I \;] letters assigned to the right subtree $A$;
	\item[II ] letters assigned to the right subtree $B$;
	\item[III] all the remaining letters.
\end{enumerate}
We observe that: 1) all letters in class I are greater than all letters in class II; 2) none of the letters in class II is a left-to-right maximum. Now $\pi'$ is obtained from $\pi$ by swapping class I letters with class II letters, therefore the status of being a left-to-right maximum or not remains the same for each letter in classes I, II, and III. So $\LMAX(\pi)=\LMAX(\pi')$ as desired. The proof of $\LMIN(\pi)=\LMIN(\pi')$ is similar by noting that none of the letters in class I is a left-to-right minimum.
\end{proof}

Let $\widetilde\DT_n$ be the set of trees $T\in\DT_n$ such that the spine of $T$ has at least one $\ominus$-node. The next result contains the main ingredient for our construction of $\Phi$. 
\begin{theorem}\label{thm:di-sk}
Let $\widetilde\DT_n^{(k,l)}:=\{T\in\widetilde\DT_n: \top(T)=k, \iop(T)=l\}$. If $k\geq1$, then there exists a bijection $\phi:\widetilde\DT_n^{(k,l)}\rightarrow \widetilde\DT_n^{(k-1,l+1)}$ satisfying
\begin{equation}\label{desb:lmax}
(\DESB,\LMAX,\LMIN)\, \eta^{-1}(T)= (\DESB,\LMAX,\LMIN)\, \eta^{-1}(\phi(T))
\end{equation}
for any $T\in\widetilde\DT_n^{(k,l)}$.
\end{theorem}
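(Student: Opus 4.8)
The plan is to assemble $\phi$ entirely from the single move $\mathcal{L}$ together with its inverse $\mathcal{L}^{-1}$. Because every such move preserves $(\DESB,\LMAX,\LMIN)\circ\eta^{-1}$ by Lemma~\ref{lem:L}, the identity \eqref{desb:lmax} will hold for free, and the whole content of the theorem reduces to two tasks: (a) realizing the prescribed change $(\top,\iop)\colon(k,l)\mapsto(k-1,l+1)$, and (b) verifying that the resulting map is a bijection. First I would record the precise action of $\mathcal{L}$ on these two statistics. Reading $T$ in inorder and grouping it into blocks, each block being a node followed by the inorder reading of its right subtree, the move $\mathcal{L}$ is exactly the interchange of a $\ominus$-rooted block with the $\oplus$-rooted block immediately following it, the $\oplus$-block moving earlier; dually, along the spine $\mathcal{L}$ slides a single $\oplus$-node one step toward the iroot. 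In particular, applied at the lowest node $s_{k-1}$ of the top $\oplus$-run it lowers $\top$ by exactly one, while repeatedly dragging a $\oplus$-block toward the front of the inorder word raises $\iop$.

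I would then construct $\phi$ by recursion on the number of nodes, peeling off the root. Since $\top(T)=k\ge1$, the root is a $\oplus$-node and $T=(L,\oplus,R)$ with $\top(L)=k-1$; moreover $L$ still carries the first inorder $\ominus$, so $\iop(T)=\iop(L)$ and $L\in\widetilde\DT$. When $k\ge2$ I set $\phi(T):=(\phi(L),\oplus,R)$ and invoke the inductive hypothesis on $L\in\widetilde\DT^{(k-1,l)}$. All moves so produced lie inside the left subtree, hence stay legal for $T$ and preserve its triple by Lemma~\ref{lem:L}; a direct check then gives $\top(\phi(T))=k-1$ and $\iop(\phi(T))=l+1$. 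This reduces everything to the base case $k=1$.

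In the base case $T=(L,\oplus,R)$ has the root of $L$ equal to $\ominus$, and the task is to rotate the root $\oplus$ down into the tree so that the new root becomes $\ominus$ (forcing $\top=0$) while the displaced $\oplus$ surfaces as the $(l+1)$-st letter of the inorder word (forcing $\iop=l+1$). The engine is again $\mathcal{L}$: I slide the root $\oplus$ step by step toward the first inorder $\ominus$-node $\tau$. The one delicate point, and the place where a naive ``move the root to the front'' fails, is that the root drags its right subtree $R$ along, and the leading $\oplus$'s in the inorder reading of $R$ would push $\iop$ up by more than one. To neutralize this I would reroute, by the same procedure applied recursively inside $R$, so that precisely one $\oplus$ is deposited in front of $\tau$ while the surplus $\oplus$-mass of $R$ is relocated past $\tau$. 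Tracking $(\top,\iop)$ through this base step then confirms the net change $(-1,+1)$.

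To finish I would describe the inverse $\widetilde\DT_n^{(k-1,l+1)}\to\widetilde\DT_n^{(k,l)}$ as the mirror image of the above: locate the $\oplus$-node sitting immediately before the first inorder $\ominus$, lift it back up to the root via the inverse moves $\mathcal{L}^{-1}$ (each raising $\top$ and lowering $\iop$ by one), and undo the recursion. I expect the main obstacle to be exactly the bookkeeping in this base case, namely proving that the deposit-and-reroute procedure places a single $\oplus$ before $\tau$, never more and never fewer, and that it is reversed precisely by the inverse procedure, so that $\phi$ and its candidate inverse are genuinely mutually inverse and land in the correct fibres $\widetilde\DT_n^{(k-1,l+1)}$ and $\widetilde\DT_n^{(k,l)}$.
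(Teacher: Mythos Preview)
Your plan coincides with the paper's: assemble $\phi$ from $\mathcal{L}^{\pm1}$ moves so that \eqref{desb:lmax} comes for free from Lemma~\ref{lem:L}, and then control only $(\top,\iop)$. Your recursion for $k\ge2$ is also equivalent to the paper's direct two-step construction, since when $\top(T)\ge2$ neither step touches the root of $T$, and hence the paper's $\phi$ already satisfies $\phi(T)=(\phi(L),\oplus,R)$.

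The gap, which you yourself flag, is the base case $k=1$. After the swing down places $w$ (carrying $R$) as the left child of the first inorder $\ominus$-node $v$, your phrase ``reroute, by the same procedure applied recursively inside $R$'' is not a construction: your $\phi$ is only defined for $\top\ge1$ whereas $\top(R)=0$, so there is no recursive call available, and you give no alternative recipe for which $\mathcal{L}^{\pm1}$ moves to perform or in what order. This is precisely where the paper does real work. Its ``backward shift'' is an explicit, non-recursive sequence of \emph{inverse} moves along the spine of $B=R$: if the $\ominus$-nodes on that spine are $v'_1,\dots,v'_\ell$ with $c_i$ many $\oplus$-nodes between $v'_i$ and $v'_{i+1}$ (and $c_\ell$ below $v'_\ell$), one applies $\mathcal{L}^{-c_1}$ at $v'_1$, then $\mathcal{L}^{-c_2}$ at $v'_2$, and so on. This cyclically shifts the label pattern on that spine so that the inorder successor of $w$ becomes $\ominus$, pinning $\iop$ at exactly $l+1$; the paper then exhibits the inverse (``forward shift'' followed by ``swing up'') explicitly. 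Without this second step, neither the value of $\iop(\phi(T))$ nor the bijectivity of $\phi$ is established.
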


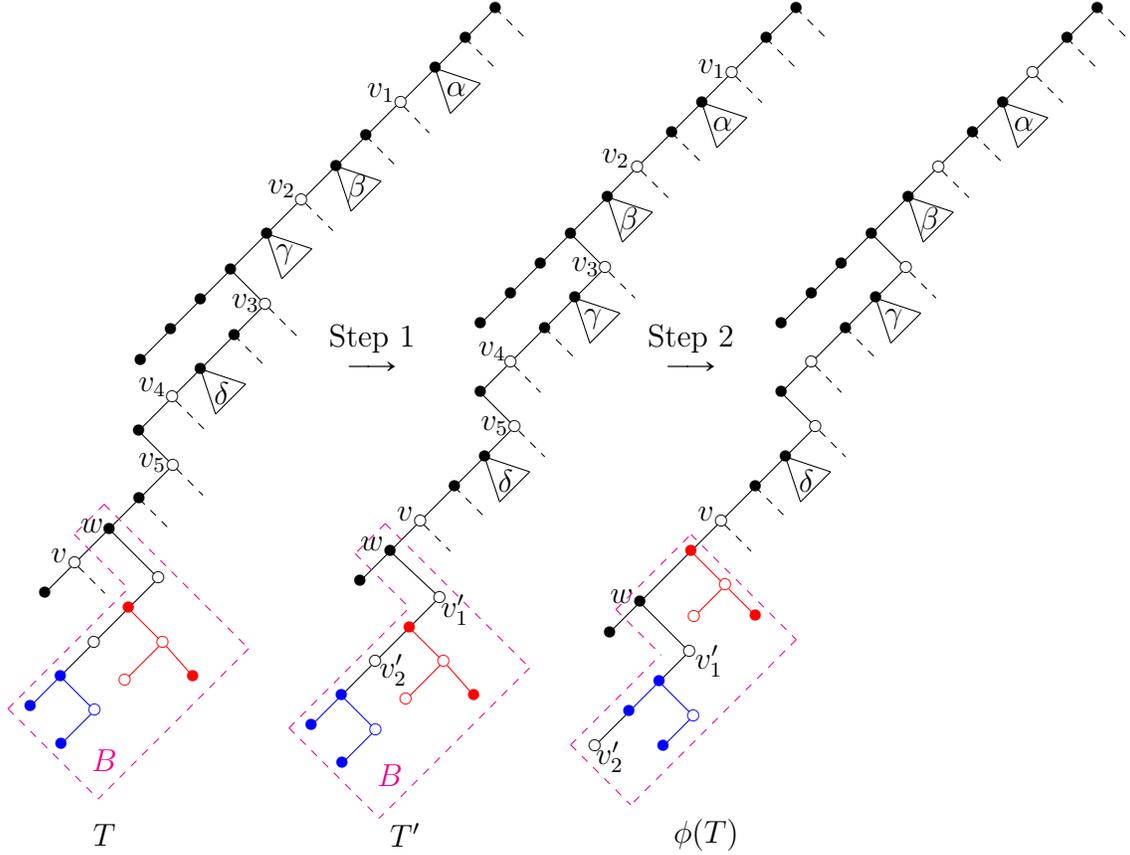
\begin{figure}
\begin{tikzpicture}[scale=0.4]
\node at (10,30) {$\bullet$};\draw[-,dashed] (10,30) to (11,29);
\draw[-] (10,30) to (9,29);
\node at (9,29) {$\bullet$};\draw[-,dashed] (9,29) to (10,28);
\draw[-] (9,29) to (8,28);
\node at (8,28) {$\bullet$};\draw[-] (8,28) to (8.5,26.5);\draw[-] (8.5,26.5) to (9.5,27.5);\draw[-] (8,28) to (9.5,27.5);
\node at (8.7,27.3) {$\alpha$};
\draw[-] (8,28) to (7,27);\node at (6.85,26.85) {$\circ$};\node at (6.2,27.2) {$v_1$};\draw[-,dashed] (6.95,26.74) to (7.8,25.85);
\draw[-] (6.7,26.75) to (5.7,25.75);\node at (5.7,25.75) {$\bullet$};\draw[-,dashed] (5.7,25.75) to (6.7,24.75);
\draw[-] (5.7,25.75) to (4.7,24.75);
\node at (4.7,24.75) {$\bullet$};\draw[-] (4.7,24.75) to (5.2,23.25);\draw[-] (5.2,23.25) to (6.2,24.25);
\draw[-]   (6.2,24.25) to (4.7,24.75);
\node at (5.4,24) {$\beta$};
\draw[-]  (4.7,24.75) to (3.7,23.75);\node at (3.55,23.6) {$\circ$};\node at (2.9,24) {$v_2$};\draw[-,dashed] (3.7,23.5) to (4.6,22.6);
\draw[-]  (3.4,23.5) to (2.4,22.5);\node at (2.4,22.5) {$\bullet$};
\draw[-] (2.4,22.5) to (2.9,21);\draw[-] (2.9,21) to (3.9,22);\draw[-] (2.4,22.5) to (3.9,22);
\node at (3,21.8) {$\gamma$};
\draw[-] (2.4,22.5) to (1.2,21.3);\node at (1.2,21.3) {$\bullet$};
\draw[-] (1.2,21.3) to (0.2,20.3);\node at (0.2,20.3) {$\bullet$};
\draw[-] (0.2,20.3) to (-0.8,19.3);\node at (-0.8,19.3) {$\bullet$};
\draw[-] (-0.8,19.3) to (-1.8,18.3);\node at (-1.8,18.3) {$\bullet$};
\draw[-] (1.2,21.3) to (2.2,20.3);\node at (2.35,20.15) {$\circ$};\node at (1.7,20.2) {$v_3$};\draw[-,dashed] (2.46,20.04) to (3.45,19.05);
\draw[-] (2.23,20.03) to (1.2,19);\node at (1.32,19.12) {$\bullet$};\draw[-,dashed] (1.32,19.12) to (2.32,18.12);
\draw[-] (1.32,19.12) to (0.2,18);\node at (0.2,18) {$\bullet$};
\draw[-] (0.2,18) to (0.7,16.5);\draw[-] (0.7,16.5) to (1.7,17.5);\draw[-] (0.2,18) to  (1.7,17.5);
\node at (0.9,17.2) {$\delta$};
\node at (5.9,18) {$\longrightarrow$};\node at (5.9,19) {Step 1};
\node at (16.5,18) {$\longrightarrow$};\node at (16.5,19) {Step 2};
\draw[-] (0.2,18) to (-0.6,17.2);\node at (-0.74,17.06) {$\circ$};\node at (-1.4,17.4) {$v_4$};\draw[-,dashed] (-0.62,16.94) to (0.36,15.96);
\draw[-] (-0.86,16.94) to (-1.86,15.94);\node at (-1.86,15.94) {$\bullet$};
\draw[-] (-1.86,15.94) to (-0.86,14.94);\node at (-0.72,14.8) {$\circ$};\node at (-1.4,14.9) {$v_5$};\draw[-,dashed] (-0.6,14.68) to (0.28,13.8);
\draw[-] (-0.84,14.68) to (-1.84,13.68);\node at (-1.84,13.68) {$\bullet$};\draw[-,dashed] (-1.84,13.68) to (-0.84,12.68);
\draw[-]  (-1.84,13.68) to (-2.84,12.68);\node at (-2.84,12.68) {$\bullet$};\node at (-3.4,12.8) {$w$};
\draw[-]   (-2.84,12.68) to (-3.84,11.68);\node at (-3.98,11.54) {$\circ$};\node at (-4.5,11.8) {$v$};\draw[-]  (-4.1,11.42) to (-4.98,10.54);\node at (-4.98,10.54) {$\bullet$};\draw[-,dashed] (-3.86,11.42) to (-2.98,10.54);
\draw[-]   (-2.84,12.68) to (-1.34,11.18);\node at  (-1.2,11.04) {$\circ$};
\draw[-]   (-1.32,10.92) to (-2.2,10.04);

\draw[-]  (-2.2,10.04) to (-3.2,9.04);\node at  (-3.34,8.9) {$\circ$};\draw[-]   (-3.46,8.78)  to (-4.46,7.78);
\red{
\node at (-2.2,10.04) {$\bullet$};
\draw[-]  (-2.2,10.04) to (-1.2,9.04);\node at  (-1.06,8.9) {$\circ$};\draw[-]  (-0.94,8.78) to (-0.06,7.78);\node at (-0.06,7.78) {$\bullet$};
\draw[-]  (-1.18,8.78) to (-2.18,7.78);\node at  (-2.32,7.64) {$\circ$};
}
\blue{
\node at (-4.46,7.78){$\bullet$};\draw[-]  (-4.46,7.78) to (-3.46,6.78);\node at  (-3.32,6.64) {$\circ$};
\draw[-]  (-3.44,6.52) to (-4.44,5.52);\node at  (-4.44,5.52){$\bullet$};
\draw[-]  (-4.46,7.78) to (-5.46,6.78);\node at  (-5.46,6.78){$\bullet$};
}
\magenta{
\draw[-,dashed] (-3,13.5) to (-4,12.5);
\draw[-,dashed] (-3,13.5) to (1.8,8.7);\draw[-,dashed]  (1.8,8.7) to (-3.2,3.7);
\draw[-,dashed]  (-6.2,6.7) to (-3.2,3.7);\draw[-,dashed]  (-6.2,6.7) to (-2.2,10.7);
\draw[-,dashed]  (-4,12.5) to (-2.2,10.7);
}
\node at  (-3,2.5){$T$};\node at  (7,2.5){$T'$};\node at  (17,2.5){$\phi(T)$};\node at  (6.5,4.5){\magenta{$B$}};\node at  (-3,5){$\magenta{B}$};
\node at (20,30) {$\bullet$};\draw[-,dashed] (20,30) to (21,29);
\draw[-] (20,30) to (19,29);
\node at (19,29) {$\bullet$};\draw[-,dashed] (19,29) to (20,28);
\draw[-] (19,29) to (18,28);\node at (17.86,27.86) {$\circ$};\node at (17.2,28) {$v_1$};\draw[-,dashed] (17.98,27.74) to (18.86,26.86);
\draw[-] (17.74,27.74) to (16.86,26.86);
\node at (16.86,26.86){$\bullet$};\draw[-] (16.86,26.86) to (17.36,25.36);\draw[-]  (17.36,25.36) to (18.36,26.36);
\draw[-]  (16.86,26.86) to (18.36,26.36);
\node at (17.56,26.16) {$\alpha$};
\draw[-]  (16.86,26.86) to  (15.86,25.86);\node at (15.86,25.86){$\bullet$};
\draw[-]    (15.86,25.86) to (14.86,24.86);\draw[-,dashed] (15.86,25.86)  to(16.86,24.86);
\node at (14.72,24.72) {$\circ$};\node at (14,25) {$v_2$};\draw[-,dashed] (14.84,24.6)  to(15.74,23.7) ;
\draw[-] (14.6,24.6) to (13.72,23.72);\node at (13.72,23.72){$\bullet$};
\draw[-] (13.72,23.72) to (14.22,22.22);\draw[-]  (14.22,22.22) to  (15.22,23.22);\draw[-] (13.72,23.72) to  (15.22,23.22);
\node at (14.42,22.9) {$\beta$};
\draw[-]  (13.72,23.72) to (12.5,22.5);\node at (12.5,22.5){$\bullet$};
\draw[-]   (12.5,22.5) to (11.5,21.5);\node at (11.5,21.5){$\bullet$};
\draw[-]   (11.5,21.5) to (10.5,20.5);\node at (10.5,20.5){$\bullet$};
\draw[-]   (10.5,20.5) to (9.5,19.5);\node at (9.5,19.5){$\bullet$};
\draw[-] (12.5,22.5) to (13.5,21.5);\node at (13.64,21.36) {$\circ$};\node at (13,21.4) {$v_3$};\draw[-,dashed] (13.78,21.22)  to(14.64,20.36);
\draw[-] (13.52,21.24) to (12.64,20.36);\node at (12.64,20.36){$\bullet$};
\draw[-]  (12.64,20.36) to (13.14,18.86);\draw[-]  (13.14,18.86) to (14.14,19.86);\draw[-]  (12.64,20.36) to (14.14,19.86);
\node at (13.24,19.66) {$\gamma$};
\draw[-]  (12.64,20.36) to (11.64,19.36);\node at (11.64,19.36){$\bullet$};\draw[-,dashed] (11.64,19.36)to(12.64,18.36);
\draw[-]  (11.64,19.36) to  (10.64,18.36);\node at (10.5,18.22) {$\circ$};\node at (9.9,18.6) {$v_4$};\draw[-,dashed] (10.62,18.12)to(11.5,17.22);
\draw[-] (10.38,18.1) to  (9.5,17.22);\node at (9.5,17.22){$\bullet$};
\draw[-]  (9.5,17.22) to (10.5,16.22);\node at (10.64,16.08) {$\circ$};\node at (9.98,16.12) {$v_5$};\draw[-,dashed] (10.78,15.94)to(11.64,15.08);
\draw[-]  (10.52,15.96) to (9.64,15.08);\node at (9.64,15.08){$\bullet$};
\draw[-]  (9.64,15.08) to  (10.14,13.58);\draw[-]   (10.14,13.58) to (11.14,14.58);\draw[-]  (9.64,15.08)to (11.14,14.58);
\node at (10.34,14.28) {$\delta$};
\draw[-]  (9.64,15.08) to  (8.64,14.08);\node at (8.64,14.08){$\bullet$};
\draw[-,dashed]   (8.64,14.08) to (9.64,13.08);
\draw[-]   (8.64,14.08) to (7.64,13.08);\node at (7.5,12.94) {$\circ$};\node at (7,13.2) {$v$};
\draw[-,dashed]   (7.62,12.82) to (8.5,11.94);
\draw[-]  (7.38,12.82) to (6.5,11.94);\node at (6.5,11.94){$\bullet$};\node at (5.9,12.2) {$w$};
\draw[-] (6.5,11.94) to (5.5,10.94);\node at (5.5,10.94){$\bullet$};
\draw[-]   (6.5,11.94) to (8,10.54);\node at  (8.14,10.4) {$\circ$};\node at (8.65,10) {$v_1'$};
\draw[-]   (8.02,10.28) to (7.14,9.4);

\draw[-]  (7.14,9.4) to (6.14,8.4);\node at  (6,8.26) {$\circ$};\node at (6.6,8) {$v_2'$};\draw[-]   (5.88,8.14)  to (4.88,7.14);
\red{
\node at (7.14,9.4) {$\bullet$};
\draw[-]  (7.14,9.4) to (8.14,8.4);\node at  (8.28,8.26) {$\circ$};\draw[-]  (8.4,8.14) to (9.28,7.14);\node at (9.28,7.14) {$\bullet$};
\draw[-]  (8.16,8.14) to (7.16,7.14);\node at  (7.02,7) {$\circ$};
}
\blue{
\node at (4.88,7.14){$\bullet$};\draw[-] (4.88,7.14) to (5.88,6.14);\node at  (6.02,6) {$\circ$};
\draw[-]  (5.9,5.88) to (4.9,4.88);\node at  (4.9,4.88){$\bullet$};
\draw[-]  (4.88,7.14) to (3.88,6.14);\node at  (3.88,6.14){$\bullet$};
}
\magenta{
\draw[-,dashed] (6.34,12.86) to (5.34,11.86);
\draw[-,dashed] (6.34,12.86) to (11.14,8.06);\draw[-,dashed]  (11.14,8.06) to (6.14,3.06);
\draw[-,dashed]  (3.14,6.06) to (6.14,3.06);\draw[-,dashed]  (3.14,6.06) to (7.14,10.06);
\draw[-,dashed]  (5.34,11.86) to (7.14,10.06);
}

\node at (30,30) {$\bullet$};\draw[-,dashed] (30,30) to (31,29);
\draw[-] (30,30) to (29,29);
\node at (29,29) {$\bullet$};\draw[-,dashed] (29,29) to (30,28);
\draw[-] (29,29) to (28,28);\node at (27.86,27.86) {$\circ$};\draw[-,dashed] (27.98,27.74) to (28.86,26.86);
\draw[-] (27.74,27.74) to (26.86,26.86);
\node at (26.86,26.86){$\bullet$};\draw[-] (26.86,26.86) to (27.36,25.36);\draw[-]  (27.36,25.36) to (28.36,26.36);
\draw[-]  (26.86,26.86) to (28.36,26.36);
\node at (27.56,26.16) {$\alpha$};
\draw[-]  (26.86,26.86) to  (25.86,25.86);\node at (25.86,25.86){$\bullet$};
\draw[-]    (25.86,25.86) to (24.86,24.86);\draw[-,dashed] (25.86,25.86)  to(26.86,24.86);
\node at (24.72,24.72) {$\circ$};\draw[-,dashed] (24.84,24.6)  to(25.74,23.7) ;
\draw[-] (24.6,24.6) to (23.72,23.72);\node at (23.72,23.72){$\bullet$};
\draw[-] (23.72,23.72) to (24.22,22.22);\draw[-]  (24.22,22.22) to  (25.22,23.22);\draw[-] (23.72,23.72) to  (25.22,23.22);
\node at (24.42,22.9) {$\beta$};
\draw[-]  (23.72,23.72) to (22.5,22.5);\node at (22.5,22.5){$\bullet$};
\draw[-]   (22.5,22.5) to (21.5,21.5);\node at (21.5,21.5){$\bullet$};
\draw[-]   (21.5,21.5) to (20.5,20.5);\node at (20.5,20.5){$\bullet$};
\draw[-]   (20.5,20.5) to (19.5,19.5);\node at (19.5,19.5){$\bullet$};
\draw[-] (22.5,22.5) to (23.5,21.5);\node at (23.64,21.36) {$\circ$};\draw[-,dashed] (23.78,21.22)  to(24.64,20.36);
\draw[-] (23.52,21.24) to (22.64,20.36);\node at (22.64,20.36){$\bullet$};
\draw[-]  (22.64,20.36) to (23.14,18.86);\draw[-]  (23.14,18.86) to (24.14,19.86);\draw[-]  (22.64,20.36) to (24.14,19.86);
\node at (23.24,19.66) {$\gamma$};
\draw[-]  (22.64,20.36) to (21.64,19.36);\node at (21.64,19.36){$\bullet$};\draw[-,dashed] (21.64,19.36)to(22.64,18.36);
\draw[-]  (21.64,19.36) to  (20.64,18.36);\node at (20.5,18.22) {$\circ$};\draw[-,dashed] (20.62,18.12)to(21.5,17.22);
\draw[-] (20.38,18.1) to  (19.5,17.22);\node at (19.5,17.22){$\bullet$};
\draw[-]  (19.5,17.22) to (20.5,16.22);\node at (20.64,16.08) {$\circ$};\draw[-,dashed] (20.78,15.94)to(21.64,15.08);
\draw[-]  (20.52,15.96) to (19.64,15.08);\node at (19.64,15.08){$\bullet$};
\draw[-]  (19.64,15.08) to  (20.14,13.58);\draw[-]   (20.14,13.58) to (21.14,14.58);\draw[-]  (19.64,15.08)to (21.14,14.58);
\node at (20.34,14.28) {$\delta$};
\draw[-]  (19.64,15.08) to  (18.64,14.08);\node at (18.64,14.08){$\bullet$};
\draw[-,dashed]   (18.64,14.08) to (19.64,13.08);
\draw[-]   (18.64,14.08) to (17.64,13.08);\node at (17.5,12.94) {$\circ$};\node at (17,13.2) {$v$};
\draw[-,dashed]   (17.62,12.82) to (18.5,11.94);
\draw[-]   (17.38,12.82) to (16.5,11.94);\draw[-]   (16.5,11.94) to (14.8,10.24);\node at (14.2,10.45) {$w$};\node at (14.8,10.24){$\bullet$};
\red{
\node at (16.5,11.94){$\bullet$};\draw[-]   (16.5,11.94) to (17.5,10.94);\node at (17.64,10.8) {$\circ$};
\draw[-]   (17.52,10.68) to (16.74,9.9);\node at (16.6,9.76) {$\circ$};\draw[-]   (17.76,10.68) to (18.64,9.8);\node at (18.64,9.8){$\bullet$};
}
\draw[-] (14.8,10.24) to (13.8,9.24);\node at (13.8,9.24){$\bullet$};
\draw[-] (14.8,10.24) to (16.3,8.74);\node at (16.44,8.6){$\circ$};\node at (17.1,8.3) {$v_1'$};
\draw[-] (16.32,8.48) to (15.44,7.6);
\draw[-] (14.44,6.6) to (13.44,5.6);\node at (13.3,5.46){$\circ$};\node at (13.8,5.1) {$v_2'$};
\blue{
\node at (15.44,7.6){$\bullet$};\draw[-] (15.44,7.6) to (14.44,6.6);\node at (14.44,6.6){$\bullet$};
\draw[-] (15.44,7.6) to (16.44,6.6);\node at (16.58,6.46){$\circ$};
\draw[-] (16.46,6.34) to (15.58,5.46);\node at(15.58,5.46){$\bullet$};
}
\magenta{
\draw[-,dashed] (14,10) to (16.5,12.5);\draw[-,dashed]  (16.5,12.5) to  (20,9);\draw[-,dashed]  (20,9) to (14.5,3.5);
\draw[-,dashed]  (12.5,5.5) to (14.5,3.5);\draw[-,dashed]  (12.5,5.5) to (15.5,8.5);\draw[-,dashed] (14,10) to (15.5,8.5);
}

\end{tikzpicture}
\caption{An illustration  of $\phi$, where each $\oplus$-node (resp.~$\ominus$-node) in di-sk trees is replaced by a solid (resp.~hollow) circle, for simplicity.\label{bij:disk}}
\end{figure}

\begin{proof}For a fixed di-sk tree $T\in\widetilde\DT_n^{(k,l)}$ with $k\geq1$,
the construction of $\phi(T)$ can be performed in the following two steps. In the first step, we do the ``{\bf swing down}'' on $T$ (see Step 1 of Fig.~\ref{bij:disk}), i.e.,
\begin{itemize}
\item Find the topmost $\ominus$-node, say $v_1$, on the spine of $T$ (since $T\in\widetilde\DT_n$, such a $v_1$ always exists). Find the first (by inorder) $\ominus$-node, say $v$ (possibly $v=v_1$), of $T$. 
\item In tree $T$, there is a unique path $P$ from $v_1$ to $v$. Let $v_1,v_2,\ldots,v_{k-1},v_k=v$ be all the $\ominus$-nodes on the path $P$ in the order we visit them when walking from $v_1$ to $v$. Note that by our choice of $v$, the path $P$ cannot have two consecutive right edges, making all of the $\ominus$-nodes $v_1,v_2,\ldots,v_{k-1},v_k=v$ on $P$ eligible for applying the transformation $\mathcal{L}$.
\item Define the di-sk tree $T'$ by 
$$
T'=\mathcal{L}(v_k,\mathcal{L}(v_{k-1},\ldots,\mathcal{L}(v_2,\mathcal{L}(v_1,T))\cdots)).
$$
\end{itemize}

For the second step, we do the ``{\bf backward shift}'' on $T'$ to obtain $\phi(T)$ (see the shift inside the dotted box in Step 2 of Fig.~\ref{bij:disk}), i.e.,
\begin{itemize}
\item Let $w$ be the left child of $v$ in $T'$. Then $w$ must be an $\oplus$-node according to the construction of the first step above. 
\item Let $B$ (possibly empty) be the right subtree of $w$. If $B$ is empty, then set $\phi(T)=T'$. Otherwise, the root of $B$ is an $\ominus$-node according to the definition of di-sk trees, as $w$ is an $\oplus$-node. Let $P'$ be the spine of $B$.
\item Let $v_1',v_2',\ldots,v_{\ell}'$ be all the $\ominus$-nodes in the path $P'$ from the top to the bottom. Then $v_1'$ is the topmost node of $P'$, which is also the root of $B$. For $1\leq i\leq \ell-1$, suppose the number of $\oplus$-nodes on $P'$ between $v_i'$ and $v_{i+1}'$ is $c_i$. Suppose the number of $\oplus$-nodes on $P'$ below $v_{\ell}'$ is $c_{\ell}$. For instance, for the middle tree $T'$ in Fig.~\ref{bij:disk}, we have $c_1=1$ and $c_2=2$. 
\item Introduce  $\mathcal{L}^{-k}(v',T'):=\mathcal{L}^{-1}(v',\mathcal{L}^{-k+1}(v',T'))$ recursively for $k\geq2$. Define the di-sk tree $\phi(T)$ by 
$$
\phi(T)=\mathcal{L}^{-c_{\ell}}(v_{\ell}',\mathcal{L}^{-c_{\ell-1}}(v_{\ell-1}',\ldots,\mathcal{L}^{-c_2}(v_2',\mathcal{L}^{-c_1}(v_1',T'))\cdots)).
$$
\end{itemize}

By the above construction, we see that the node immediately after (by inorder) $w$ of $\phi(T)$ is an $\ominus$-node and therefore $\phi(T)$ is a di-sk tree in $\widetilde\DT_n^{(k-1,l+1)}$. We aim to show that $\phi$ is a bijection by defining $\phi^{-1}$ explicitly. 

For a di-sk tree $\tilde T\in\widetilde\DT_n^{(k-1,l+1)}$ with $k\geq1$, $l\ge 0$, we use two steps to obtain $\phi^{-1}(\tilde T)$ from $\tilde T$ as follows. In the first step, we do the ``{\bf forward shift}'' on $\tilde T$ to obtain a di-sk tree $\tilde T^*$, i.e., 
\begin{itemize}
\item Find the $\oplus$-node, say $w$,  immediately before the first $\ominus$-node (by inorder), such a $w$ always exists since $\iop(\tilde T)=l+1\ge 1$. Trace the unique path from $w$ back to the root of $\tilde T$ to locate the first $\ominus$-node, say $v$. Such a $v$ always exists since $\tilde T\in\widetilde\DT_n$. Let $c_1$ be the number of nodes (necessarily $\oplus$-nodes) in the path from $v$ to $w$, with $v$ and $w$ excluded. 
\item Let $\tilde B$ (possibly empty) be the right subtree of $w$ in $\tilde T$. If $\tilde B$ is empty, then $v$ must be the first $\ominus$-node by inorder. In this case, $c_1=0$ and we set $\tilde T^*=\tilde T$. Otherwise, $\tilde B$ is not empty and let us consider the the spine $\tilde P^*$ of $\tilde B$. 
\item Let $v_1',v_2',\ldots,v_{\ell}'$ be all the $\ominus$-nodes in the path $\tilde P^*$ from the top to the bottom. Then $v_1'$ and $v_{\ell}'$ (possibly coincide) are the root and the tail of the path $\tilde P^*$, respectively. For $2\leq i\leq \ell$, let $c_i$ be the number of $\oplus$-nodes between $v_{i-1}'$ and $v_i'$ in the path $\tilde P^*$.
\item Introduce  $\mathcal{L}^{k}(v',\tilde T):=\mathcal{L}(v',\mathcal{L}^{k-1}(v',\tilde T))$ recursively for $k\geq2$. Define the di-sk tree $\tilde T^*$ by 
$$
\tilde T^*=\mathcal{L}^{c_{\ell}}(v_{\ell}',\mathcal{L}^{c_{\ell-1}}(v_{\ell-1}',\ldots,\mathcal{L}^{c_2}(v_2',\mathcal{L}^{c_1}(v_1',\tilde T))\cdots)).
$$
\end{itemize}

We see that $\tilde T^*\in\widetilde\DT_n$ and the ``{\bf forward shift}'' is clearly inverse to the ``{\bf backward shift}''. For the second step,  we do the ``{\bf swing up}'' on $\tilde T^*$ to obatin $\phi^{-1}(\tilde T)$: 
\begin{itemize}
\item From the construction of the ``{\bf forward shift}'',  the $\ominus$-node $v$ becomes  the parent of $w$ in $\tilde T^*$. Let $v_1$ be the topmost $\ominus$-node in the spine of $\tilde T^*$. Such a $v_1$ always exists since $\tilde T^*\in\widetilde\DT_n$.
\item  In the tree $\tilde T^*$, there is a unique path $\tilde P$ from $v_1$ to $v$, which contains no consecutive right edges. Let $v_1,v_2,\ldots,v_{k-1},v_k=v$ be all the $\ominus$-nodes on $\tilde P$ in the order we visit them when walking from $v_1$ to $v$. 
\item Define the di-sk tree $\phi^{-1}(\tilde T)$ by 
$$
\phi^{-1}(\tilde T)=\mathcal{L}^{-1}(v_1,\mathcal{L}^{-1}(v_{2},\ldots,\mathcal{L}^{-1}(v_{k-1},\mathcal{L}^{-1}(v_k,\tilde T^*))\cdots)).
$$
\end{itemize}

Since the ``{\bf swing up}'' is inverse to the ``{\bf swing down}'', the mapping $\phi^{-1}$ is  indeed the inverse of $\phi$. As every step of the bijection $\phi$ involves only the elementary transformation $\mathcal{L}$, Lemma~\ref{lem:L} guarantees that the desired property~\eqref{desb:lmax} holds. This ends the proof of the theorem. 
\end{proof}

\begin{example}
As an example of $\phi$,  the di-sk tree $T\in\widetilde\DT_{11}^{(1,0)}$ in Fig.~\ref{di-sk} and its image $\phi(T)\in\widetilde\DT_{11}^{(0,1)}$ under $\phi$ are drawn in Fig.~\ref{exam:phi}. One can check that $\eta^{-1}(T)=\pi=5\,2\,3\,4\,1\,9\,11\,10\,6\,8\,7$, $\eta^{-1}(\phi(T))=\pi'=5\,9\,6\,8\,7\,11\,10\,2\,3\,4\,1$,
$$
(\DESB,\LMAX,\LMIN)\, \pi=(\{1,2,6,7,10\},\{5,9,11\},\{5,2,1\})=(\DESB,\LMAX,\LMIN)\, \pi',
$$
 $(\comp,\iar)\,\pi=(2,1)$ and $(\comp,\iar)\,\pi'=(1,2)$.
\begin{figure}
\begin{tikzpicture}[scale=0.2]
\draw[-] (17,8) to (19,10);
\draw[-] (20,10) to (22,8);
\draw[-] (22,7) to (20,5);
\draw[-] (23,7) to (25,5);
\draw[-] (20,4) to (22,2);
\draw[-] (26,4) to (28,2);
\draw[-] (16,7) to (14,5);
\node at (13.5,4.5) {$\ominus$};
\draw[-] (14,4) to (16,2);
\node at (16.5,1.5) {$\oplus$};
\draw[-] (16,1) to (14,-1);
\node at (13.5,-1.5) {$\oplus$};
\node at (16.5,7.5) {$\ominus$};
\node at (19.5,10.5) {$\oplus$};
\node at (22.5,7.5) {$\ominus$};
\node at (19.5,4.5) {$\oplus$};
\node at (25.5,4.5) {$\oplus$};
\node at (22.5,1.5) {$\ominus$};
\node at (28.5,1.5) {$\ominus$};

\node at (31,6.5) {$\longrightarrow$};\node at (31,8) {$\phi$};
\node at (44.5,10.5) {$\ominus$};
\draw[-] (44,10) to (42,8);\node at (41.5,7.5) {$\ominus$};
\draw[-] (44,4) to (42.5,2.5);\node at (42,2) {$\oplus$};
\draw[-] (42,7) to (44,5);\node at (44.5,4.5) {$\oplus$};
\draw[-] (41,7) to (38,4);\node at (37.5,3.5) {$\oplus$};
\draw[-] (37,3) to (35,1);\node at (34.5,0.5) {$\oplus$};
\draw[-] (38,3) to (40,1);\node at (40.5,0.5) {$\ominus$};
\draw[-] (35,0) to (37,-2);\node at (37.5,-2.5) {$\ominus$};
\draw[-] (38,-3) to (40,-5);\node at (40.5,-5.5) {$\oplus$};
\draw[-] (41,-6) to (43,-8);\node at (43.5,-8.5) {$\ominus$};
\end{tikzpicture}
\caption{An example of the bijection $\phi$.\label{exam:phi}}
\end{figure}
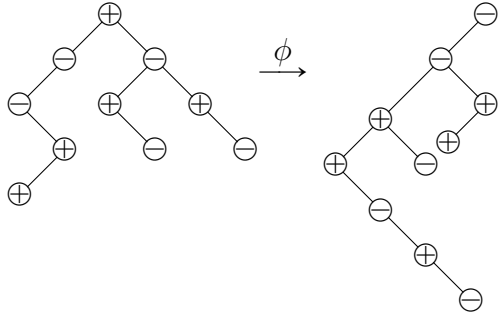
\end{example}

Now we are ready to prove our main result, Theorem~\ref{thm:sep:sym}. 

\begin{proof}[{\bf Proof of Theorem~\ref{thm:sep:sym}}] The involution $\Phi$ is defined recursively using $\phi$. Set $\Phi(\id_1)=\id_1$. For each $\pi=\pi_1\pi_2\cdots\pi_n\in\S_n(2413,3142)$ and $n\geq2$, we need to distinguish two cases as follows. 
\begin{itemize}
\item If $\eta(\pi)\in\DT_n\setminus \widetilde \DT_n$, i.e., the spine of $\eta(\pi)$ is composed of $\oplus$-nodes only, hence $\pi=1\oplus\sigma$, where $\sigma=(\pi_2-1)(\pi_3-1)\cdots(\pi_n-1)\in\S_{n-1}(2413,3142)$. Define $\Phi(\pi)=1\oplus\Phi(\sigma)$.
\item Otherwise, we have $\eta(\pi)\in\widetilde \DT_n^{(k,l)}$ for some $k$ and $l$. Let us define 
$$\Phi(\pi)=\eta^{-1}\circ\phi^{k-l}\circ\eta(\pi).$$
\end{itemize}

For instance, if $\pi=2\,4\,5\,9\,6\,8\,7\,11\,10\,3\,1\in\S_{11}(2413,3142)$, then $\eta(\pi)\in\widetilde \DT_{11}^{(0,3)}$ and so
$$
\Phi(\pi)=\eta^{-1}\circ\phi^{-3}\circ\eta(\pi)=2\,1\,4\,3\,5\,9\,11\,10\,6\,8\,7.
$$
It follows from Lemmas~\ref{lem:sep1},~\ref{lem:sep2} and Theorem~\ref{thm:di-sk} that $\Phi$ is an involution that preserves the triple of set-valued statistics $(\LMAX,\LMIN,\DESB)$ but exchanges the pair $(\comp,\iar)$, which completes the proof of the first statement of Theorem~\ref{thm:sep:sym}. 

For the second statement, we observe that $\pi\in\S_n(312)$ if and only if each $\ominus$-node of $\eta(\pi)$ has no right child. It is clear that the elementary transformation $\mathcal{L}$, which deletes/inserts left edges only, preserves this kind of property. Consequently, $\Phi$ indeed restricts to an involution on $\S_n(312)$.
\end{proof}

\begin{remark}
Since $\Phi$ restricts to an involution on ${\S_{n}(312)}$, we get immediately that the two quintuples 
$(\LMAX,\LMIN,\DESB,\comp,\iar)$ and $(\LMAX,\LMIN,\DESB,\iar,\comp)$ have the same distribution over $\S_n(312)$. However, as $\LMIN(\pi)=\{1,2,\ldots,\pi_1\}$ for any $\pi\in\S_n(312)$ and  $\pi_1=\min(\LMAX(\pi))$,  this result is equivalent to Theorem 1.1~(ii) of~\cite{flw}.
\end{remark}

\section{tree traversal}\label{sec:tree-tra}

In retrospect, we note that for each di-sk tree $T$, the statistic $\top(T)$ also equals the number of initial $\oplus$-nodes in $T$ when we use the {\em preorder} (i.e., recursively traversing the parent to the left subtree then to the right subtree) instead of the inorder to traverse $T$. Motivated by this new perspective, we investigate in this  section the distributions of the number of initial $\oplus$-nodes with respect to the following eight types of tree traversal (see Tab.~\ref{eight-tree-tra}). The first six of which are usually called {\em depth first traversals}, while the last two are called {\em breadth first traversals}.

\begin{table}[bp]\caption{Eight types of tree traversal for a di-sk tree $T$ and their associated statistics for the number of initial $\oplus$-nodes}\label{eight-tree-tra}
\centering
\begin{tabular}{|c||c|c|c|c|}
\hline
Name & inorder & right inorder & preorder & right preorder \\
\hline
Rule & Left--Root--Right & Right--Root--Left & Root--Left--Right & Root--Right--Left \\
\hline
Stat & $\iop(T)$ & $\riop(T)$ & $\top(T)$ & $\rtop(T)$ \\
\hline
Name & postorder & right postorder & level order & right level order \\
\hline
Rule & Left--Right--Root & Right--Left--Root & Left--Right--Next level & Right--Left--Next level \\
\hline
Stat & $\pop(T)$ & $\rpop(T)$ & $\lop(T)$ & $\rlop(T)$ \\
\hline
\end{tabular}
\end{table}

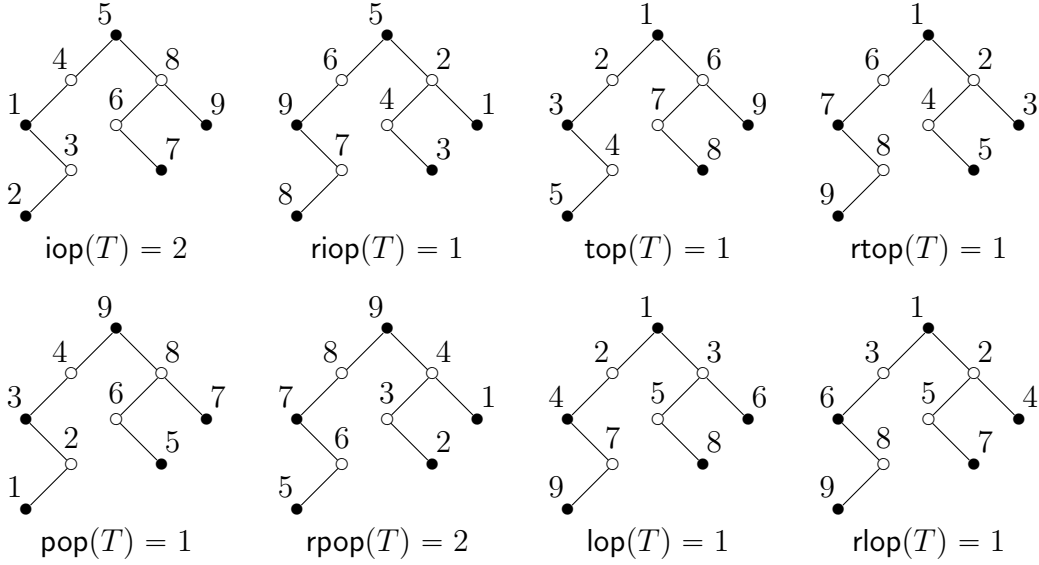
\begin{figure}
\begin{tikzpicture}[scale=0.3]
\node at (2.5,19.5) {$\bullet$};
\node at (4.5,21.5) {$\circ$};
\node at (6.5,23.5) {$\bullet$};
\node at (2.5,15.5) {$\bullet$};
\node at (4.5,17.5) {$\circ$};
\node at (6.5,19.5) {$\circ$};
\node at (8.5,21.5) {$\circ$};
\node at (8.5,17.5) {$\bullet$};
\node at (10.5,19.5) {$\bullet$};
\draw[-] (2.7,19.7) to (4.33,21.33);
\draw[-] (4.7,21.7) to (6.33,23.33);
\draw[-] (2.7,15.7) to (4.33,17.33);
\draw[-] (6.7,19.7) to (8.33,21.33);
\draw[-] (2.67,19.33) to (4.3,17.7);
\draw[-] (6.67,23.33) to (8.3,21.7);
\draw[-] (6.67,19.33) to (8.3,17.7);
\draw[-] (8.67,21.33) to (10.3,19.7);
\node at (2,20.5) {$1$};
\node at (2,16.5) {$2$};
\node at (4.5,18.7) {$3$};
\node at (4,22.5) {$4$};
\node at (6,24.5) {$5$};
\node at (6.5,20.7) {$6$};
\node at (9,18.5) {$7$};
\node at (9,22.5) {$8$};
\node at (11,20.5) {$9$};

\node at (6.5,14) {$\iop(T)=2$};

\node at (14.5,19.5) {$\bullet$};
\node at (16.5,21.5) {$\circ$};
\node at (18.5,23.5) {$\bullet$};
\node at (14.5,15.5) {$\bullet$};
\node at (16.5,17.5) {$\circ$};
\node at (18.5,19.5) {$\circ$};
\node at (20.5,21.5) {$\circ$};
\node at (20.5,17.5) {$\bullet$};
\node at (22.5,19.5) {$\bullet$};
\draw[-] (14.7,19.7) to (16.33,21.33);
\draw[-] (16.7,21.7) to (18.33,23.33);
\draw[-] (14.7,15.7) to (16.33,17.33);
\draw[-] (18.7,19.7) to (20.33,21.33);
\draw[-] (14.67,19.33) to (16.3,17.7);
\draw[-] (18.67,23.33) to (20.3,21.7);
\draw[-] (18.67,19.33) to (20.3,17.7);
\draw[-] (20.67,21.33) to (22.3,19.7);
\node at (14,20.5) {$9$};
\node at (14,16.5) {$8$};
\node at (16.5,18.7) {$7$};
\node at (16,22.5) {$6$};
\node at (18,24.5) {$5$};
\node at (18.5,20.7) {$4$};
\node at (21,18.5) {$3$};
\node at (21,22.5) {$2$};
\node at (23,20.5) {$1$};

\node at (18.5,14) {$\riop(T)=1$};

\node at (26.5,19.5) {$\bullet$};
\node at (28.5,21.5) {$\circ$};
\node at (30.5,23.5) {$\bullet$};
\node at (26.5,15.5) {$\bullet$};
\node at (28.5,17.5) {$\circ$};
\node at (30.5,19.5) {$\circ$};
\node at (32.5,21.5) {$\circ$};
\node at (32.5,17.5) {$\bullet$};
\node at (34.5,19.5) {$\bullet$};
\draw[-] (26.7,19.7) to (28.33,21.33);
\draw[-] (28.7,21.7) to (30.33,23.33);
\draw[-] (26.7,15.7) to (28.33,17.33);
\draw[-] (30.7,19.7) to (32.33,21.33);
\draw[-] (26.67,19.33) to (28.3,17.7);
\draw[-] (30.67,23.33) to (32.3,21.7);
\draw[-] (30.67,19.33) to (32.3,17.7);
\draw[-] (32.67,21.33) to (34.3,19.7);
\node at (26,20.5) {$3$};
\node at (26,16.5) {$5$};
\node at (28.5,18.7) {$4$};
\node at (28,22.5) {$2$};
\node at (30,24.5) {$1$};
\node at (30.5,20.7) {$7$};
\node at (33,18.5) {$8$};
\node at (33,22.5) {$6$};
\node at (35,20.5) {$9$};

\node at (30.5,14) {$\top(T)=1$};

\node at (38.5,19.5) {$\bullet$};
\node at (40.5,21.5) {$\circ$};
\node at (42.5,23.5) {$\bullet$};
\node at (38.5,15.5) {$\bullet$};
\node at (40.5,17.5) {$\circ$};
\node at (42.5,19.5) {$\circ$};
\node at (44.5,21.5) {$\circ$};
\node at (44.5,17.5) {$\bullet$};
\node at (46.5,19.5) {$\bullet$};
\draw[-] (38.7,19.7) to (40.33,21.33);
\draw[-] (40.7,21.7) to (42.33,23.33);
\draw[-] (38.7,15.7) to (40.33,17.33);
\draw[-] (42.7,19.7) to (44.33,21.33);
\draw[-] (38.67,19.33) to (40.3,17.7);
\draw[-] (42.67,23.33) to (44.3,21.7);
\draw[-] (42.67,19.33) to (44.3,17.7);
\draw[-] (44.67,21.33) to (46.3,19.7);
\node at (38,20.5) {$7$};
\node at (38,16.5) {$9$};
\node at (40.5,18.7) {$8$};
\node at (40,22.5) {$6$};
\node at (42,24.5) {$1$};
\node at (42.5,20.7) {$4$};
\node at (45,18.5) {$5$};
\node at (45,22.5) {$2$};
\node at (47,20.5) {$3$};

\node at (42.5,14) {$\rtop(T)=1$};


\node at (2.5,6.5) {$\bullet$};
\node at (4.5,8.5) {$\circ$};
\node at (6.5,10.5) {$\bullet$};
\node at (2.5,2.5) {$\bullet$};
\node at (4.5,4.5) {$\circ$};
\node at (6.5,6.5) {$\circ$};
\node at (8.5,8.5) {$\circ$};
\node at (8.5,4.5) {$\bullet$};
\node at (10.5,6.5) {$\bullet$};
\draw[-] (2.7,6.7) to (4.33,8.33);
\draw[-] (4.7,8.7) to (6.33,10.33);
\draw[-] (2.7,2.7) to (4.33,4.33);
\draw[-] (6.7,6.7) to (8.33,8.33);
\draw[-] (2.67,6.33) to (4.3,4.7);
\draw[-] (6.67,10.33) to (8.3,8.7);
\draw[-] (6.67,6.33) to (8.3,4.7);
\draw[-] (8.67,8.33) to (10.3,6.7);
\node at (2,7.5) {$3$};
\node at (2,3.5) {$1$};
\node at (4.5,5.7) {$2$};
\node at (4,9.5) {$4$};
\node at (6,11.5) {$9$};
\node at (6.5,7.7) {$6$};
\node at (9,5.5) {$5$};
\node at (9,9.5) {$8$};
\node at (11,7.5) {$7$};

\node at (6.5,1) {$\pop(T)=1$};

\node at (14.5,6.5) {$\bullet$};
\node at (16.5,8.5) {$\circ$};
\node at (18.5,10.5) {$\bullet$};
\node at (14.5,2.5) {$\bullet$};
\node at (16.5,4.5) {$\circ$};
\node at (18.5,6.5) {$\circ$};
\node at (20.5,8.5) {$\circ$};
\node at (20.5,4.5) {$\bullet$};
\node at (22.5,6.5) {$\bullet$};
\draw[-] (14.7,6.7) to (16.33,8.33);
\draw[-] (16.7,8.7) to (18.33,10.33);
\draw[-] (14.7,2.7) to (16.33,4.33);
\draw[-] (18.7,6.7) to (20.33,8.33);
\draw[-] (14.67,6.33) to (16.3,4.7);
\draw[-] (18.67,10.33) to (20.3,8.7);
\draw[-] (18.67,6.33) to (20.3,4.7);
\draw[-] (20.67,8.33) to (22.3,6.7);
\node at (14,7.5) {$7$};
\node at (14,3.5) {$5$};
\node at (16.5,5.7) {$6$};
\node at (16,9.5) {$8$};
\node at (18,11.5) {$9$};
\node at (18.5,7.7) {$3$};
\node at (21,5.5) {$2$};
\node at (21,9.5) {$4$};
\node at (23,7.5) {$1$};

\node at (18.5,1) {$\rpop(T)=2$};

\node at (26.5,6.5) {$\bullet$};
\node at (28.5,8.5) {$\circ$};
\node at (30.5,10.5) {$\bullet$};
\node at (26.5,2.5) {$\bullet$};
\node at (28.5,4.5) {$\circ$};
\node at (30.5,6.5) {$\circ$};
\node at (32.5,8.5) {$\circ$};
\node at (32.5,4.5) {$\bullet$};
\node at (34.5,6.5) {$\bullet$};
\draw[-] (26.7,6.7) to (28.33,8.33);
\draw[-] (28.7,8.7) to (30.33,10.33);
\draw[-] (26.7,2.7) to (28.33,4.33);
\draw[-] (30.7,6.7) to (32.33,8.33);
\draw[-] (26.67,6.33) to (28.3,4.7);
\draw[-] (30.67,10.33) to (32.3,8.7);
\draw[-] (30.67,6.33) to (32.3,4.7);
\draw[-] (32.67,8.33) to (34.3,6.7);
\node at (26,7.5) {$4$};
\node at (26,3.5) {$9$};
\node at (28.5,5.7) {$7$};
\node at (28,9.5) {$2$};
\node at (30,11.5) {$1$};
\node at (30.5,7.7) {$5$};
\node at (33,5.5) {$8$};
\node at (33,9.5) {$3$};
\node at (35,7.5) {$6$};

\node at (30.5,1) {$\lop(T)=1$};

\node at (38.5,6.5) {$\bullet$};
\node at (40.5,8.5) {$\circ$};
\node at (42.5,10.5) {$\bullet$};
\node at (38.5,2.5) {$\bullet$};
\node at (40.5,4.5) {$\circ$};
\node at (42.5,6.5) {$\circ$};
\node at (44.5,8.5) {$\circ$};
\node at (44.5,4.5) {$\bullet$};
\node at (46.5,6.5) {$\bullet$};
\draw[-] (38.7,6.7) to (40.33,8.33);
\draw[-] (40.7,8.7) to (42.33,10.33);
\draw[-] (38.7,2.7) to (40.33,4.33);
\draw[-] (42.7,6.7) to (44.33,8.33);
\draw[-] (38.67,6.33) to (40.3,4.7);
\draw[-] (42.67,10.33) to (44.3,8.7);
\draw[-] (42.67,6.33) to (44.3,4.7);
\draw[-] (44.67,8.33) to (46.3,6.7);
\node at (38,7.5) {$6$};
\node at (38,3.5) {$9$};
\node at (40.5,5.7) {$8$};
\node at (40,9.5) {$3$};
\node at (42,11.5) {$1$};
\node at (42.5,7.7) {$5$};
\node at (45,5.5) {$7$};
\node at (45,9.5) {$2$};
\node at (47,7.5) {$4$};

\node at (42.5,1) {$\rlop(T)=1$};
\end{tikzpicture}
\caption{Eight types of traversal for a di-sk tree $T$ and its associated statistics}
\label{8traversal}
\end{figure}

We use $\st_1\sim_{S}\st_2$ to indicate that the two statistics $\st_1$ and $\st_2$ are equidistributed over the set $S$. For instance, Theorem~\ref{thm:sep:sym} implies in particular that $\iar\sim_{\S_n(2413,3142)}\comp$, or equivalently upon applying the mapping $\eta$, we have $\iop\sim_{\DT_n}\top$. We simply write $\st_1\sim\st_2$ when the set $S$ is clear from the context.

Our first result in this section classifies the corresponding eight statistics in Tab.~\ref{eight-tree-tra} into three equidistribution classes.


\begin{theorem}\label{thm:transver}
Over the set of di-sk trees $\DT_n$, the distributions of the eight statistics in Tab.~\ref{eight-tree-tra} group into three classes. Namely, we have two classes
\begin{align}
&\rtop=\rlop, \text{ and}\label{two equal}\\
&\riop\sim\iop\sim\top\sim\pop\sim\rpop, \label{five equal}
\end{align}
where $\st_1=\st_2$ means that $\st_1(T)=\st_2(T)$ for each $T\in\DT_n$. The distribution of $\lop$ over $\DT_n$ is different from the other seven statistics for $n\ge 5$.
\end{theorem}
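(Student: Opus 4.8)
The plan is to reduce the statement to two generating-function inequalities and to read off all three relevant statistics from a single structural feature at the top of the tree.

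\emph{Reduction.} By \eqref{two equal} and \eqref{five equal}, the seven statistics other than $\lop$ occupy only two distribution classes, represented by $\top$ (for $\riop\sim\iop\sim\top\sim\pop\sim\rpop$) and by $\rtop$ (for $\rtop=\rlop$). So it suffices to prove that for $n\ge5$ the distribution of $\lop$ over $\DT_n$ differs both from that of $\top$ and from that of $\rtop$. I will compare tail generating functions: for a statistic $\st$, write $P(\st\ge j)$ for the sum of $x^{\#\mathrm{nodes}(T)}$ over all di-sk trees $T$ with $\st(T)\ge j$. Let $T=T(x)$ be the large Schr\"oder series counting all di-sk trees by number of nodes, let $U$ count those with root $\ominus$, and set $V=1+U$; decomposing by the root gives $T=1+2U$, $U=xTV$, and the identity $V(1-2xV)=1-xV$ used below.

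\emph{A common structural lemma.} For each of the three traversals, the event $\{\st(T)\ge j\}$ forces the first $j$ nodes visited to be a left path $v_1,\dots,v_j$ of $\oplus$-nodes (with $v_{i+1}$ the left child of $v_i$). The endpoint $v_j$ always carries an arbitrary left subtree (factor $T$) and an arbitrary right subtree, whose root is necessarily $\ominus$ (factor $V$); what differs is which of $v_1,\dots,v_{j-1}$ may carry such a right subtree. In preorder a right subtree is visited only after the entire left subtree, so every $v_i$ may carry one; in level order the right child of $v_i$ lies at depth $i$ and is visited immediately after $v_{i+1}$, so only $v_{j-1}$ may carry one without admitting an $\ominus$ among the first $j$ nodes; in right preorder a right subtree is visited first and would terminate the run at once, so none of $v_1,\dots,v_{j-1}$ may carry one. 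Encoding each path node by $x$ and each admissible right subtree by $V$, this gives for $j\ge2$
\begin{equation*}
P(\top\ge j)=x^{j}TV^{j},\qquad P(\lop\ge j)=x^{j}TV^{2},\qquad P(\rtop\ge j)=x^{j}TV,
\end{equation*}
while $P(\lop\ge1)=P(\rtop\ge1)=U=xTV$. (The formula $P(\top\ge j)=x^{j}TV^{j}$ is consistent with the description of $\top$ via $\comp$.)

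\emph{The comparison.} From these formulas $P(\lop\ge2)=x^{2}TV^{2}=P(\top\ge2)$, whereas
\begin{equation*}
P(\top\ge3)-P(\lop\ge3)=x^{3}TV^{2}(V-1)=x^{3}TV^{2}U
\end{equation*}
has strictly positive coefficients from degree $4$ on; combined with $P(\lop\ge2)=P(\top\ge2)$ this yields $\#\{T\in\DT_n:\lop(T)=2\}>\#\{T\in\DT_n:\top(T)=2\}$ for every $n\ge5$, so $\lop\not\sim\top$. Likewise
\begin{equation*}
P(\lop\ge2)-P(\rtop\ge2)=x^{2}TV(V-1)=x^{2}TVU
\end{equation*}
has strictly positive coefficients from degree $3$ on, giving $\#\{T:\lop(T)\ge2\}>\#\{T:\rtop(T)\ge2\}$ and hence $\lop\not\sim\rtop=\rlop$ for $n\ge5$. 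Together these force $\lop$ to be distributed differently from each of the other seven statistics once $n\ge5$.

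The main obstacle is the level-order half of the structural lemma. The six depth-first traversals decompose recursively over the root, but $\lop$ does not, so one must argue directly that a long initial $\oplus$-run in breadth-first order collapses the top of the tree to a single left path and then pin down precisely which path node is still permitted a right subtree. Once that shape is established, the generating-function identities and the positivity of the two coefficient differences are routine.
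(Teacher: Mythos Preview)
Your proposal addresses only the final sentence of the theorem --- that $\lop$ has a different distribution from the other seven statistics for every $n\ge5$ --- and for that part it is correct. The structural lemma for $\{T:\lop(T)\ge j\}$ holds: in level order the children of $v_i$ sit at depth $i$ and are visited immediately after $v_{i+1}$, so for $i\le j-2$ a right child of $v_i$ would appear as the $(i+2)$-th node and terminate the $\oplus$-run too early, whereas a right child of $v_{j-1}$ comes only at position $j+1$. The tail generating functions $P(\top\ge j)=x^{j}TV^{j}$, $P(\lop\ge j)=x^{j}TV^{2}$, $P(\rtop\ge j)=x^{j}TV$ (for $j\ge2$) are then correct, and the differences $P(\top\ge3)-P(\lop\ge3)=x^{3}TV^{2}U$ and $P(\lop\ge2)-P(\rtop\ge2)=x^{2}TVU$ do have strictly positive coefficients from degree $4$ (respectively $3$) on, separating $\lop$ from both classes for all $n\ge5$.

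However, you are not proving the theorem: you invoke \eqref{two equal} and \eqref{five equal} as established facts, but those two chains of equalities and equidistributions are precisely what the theorem asserts, and the paper's proof is devoted almost entirely to them. The paper shows $\rtop=\rlop$ by a short induction on the root decomposition; $\riop\sim\iop$ by conjugating with reverse--complement through $\eta$; $\iop\sim\top$ via the earlier involution of Theorem~\ref{thm:di-sk}; $\top\sim\rpop$ via the generating-function computation of Theorem~\ref{thm:rpop}; and $\pop\sim\rpop$ via a recursive left/right-swapping bijection $\theta$. Conversely --- and this is worth noting --- the paper gives \emph{no} argument at all for the $\lop$ claim; it is simply asserted. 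So your generating-function comparison genuinely supplies what the paper omits, but as a proof of the full statement your proposal is incomplete: you still owe the arguments for \eqref{two equal} and \eqref{five equal}.
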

\begin{proof}
Firstly, it can be quickly checked that the eight statistics have the same distribution among the two di-sk trees in $\DT_2$ and the six di-sk trees in $\DT_3$. We next show that $\rtop=\rlop$ by induction on $n$. Take any di-sk tree $T\in\DT_n$, if the root of $T$ is an $\ominus$-node, then $\rtop(T)=\rlop(T)=0$. Otherwise we can write $T=(T_1,\oplus,T_2)$, with $T_1$ and $T_2$ being the left subtree and the right subtree of $T$, respectively. Now if $T_2$ is nonempty, then its root must be an $\ominus$-node since $T$ is a di-sk tree, thus we have $\rtop(T)=\rlop(T)=1$. If $T_2$ is empty, then we have
$\rtop(T)=1+\rtop(T_1)=1+\rlop(T_1)=\rlop(T)$ as well. We have finished the proof of \eqref{two equal}.

Next, we prove $\riop\sim\iop$ via a bijection. Recall the elementary operation {\em reverse-complement} on permutations:
\begin{align*}
\pi=\pi_1\cdots\pi_n &\mapsto \rc(\pi):=(n+1-\pi_n)\cdots (n+1-\pi_1).
\end{align*}
It then suffices to note that for each $T\in\DT_n$,
\begin{align*}
\riop(T)=\iop(\eta\circ\rc\circ\eta^{-1}(T)),
\end{align*}
and the fact that $\S_n(2413,3142)$ is closed under the reverse-complement map.

Since we have already proved the symmetry of $(\iop,\top)$ in Theorem~\ref{thm:di-sk}, it remains to show the two equidistributions $\top\sim\rpop$ and $\pop\sim\rpop$ 
to finish the proof of~\eqref{five equal}. The first one follows from the symmetry of $(\rpop,\top)$ that will be proved in Theorem~\ref{thm:rpop} using generating functions, while the second one is proved 
 via a recursively constructed bijection $\theta$ that we define next. Indeed, for each $n\ge 1$, we construct a bijection $\theta:\DT_n\rightarrow\DT_n$  satisfying 
\begin{align}\label{bij:pop=rpop}
(\pop,\rpop) T=(\rpop,\pop) \theta(T),
\end{align} 
for every $T\in\DT_n$. Let $\emptyset$ be the empty tree and we set $\theta(\emptyset)=\emptyset$. For a di-sk tree $T \in \mathfrak{DT}_n$ with $n \geq 2$, suppose $T=(T_1,r,T_2)$, where $r=\oplus$ or $\ominus$ is the root of $T$, while $T_1$ (resp.~$T_2$) is the left (resp.~right) subtree of $T$. We consider the following three cases.
\begin{itemize}
	\item If $T_2=\emptyset$, then we define $\theta(T)=(\theta(T_1),r,\emptyset)$.
	\item If $T_2\neq\emptyset$ and $T_1=\emptyset$, then we let $\theta(T)$ be the unique di-sk tree in $\DT_n$ having no left subtree and $\theta(T_2)$ as its right subtree.
	\item Otherwise $T_1\neq\emptyset$ and $T_2\neq\emptyset$, then we let $\theta(T)$ be the unique di-sk tree in $\DT_n$ having $\theta(T_2)$ and $\theta(T_1)$ as its left and right subtrees, respectively.
\end{itemize}
In all three cases, we can verify that~\eqref{bij:pop=rpop} holds true assuming it has been proved for trees with fewer nodes. It should also be clear how to invert the map $\theta$. The proof is now completed.
\end{proof}

For a di-sk tree $T$, let $\omi(T)$ be the number of $\ominus$-nodes in $T$. 
\begin{theorem}\label{thm:rpop}
For $n\geq1$, the two triples $(\omi,\rpop,\top)$ and $(\omi,\top,\rpop)$ have the same distribution over $\DT_n$.
In particular, the pair $(\rpop,\top)$ is symmetric over $\DT_n$.
\end{theorem}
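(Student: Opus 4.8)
The plan is to prove the refined symmetry through generating functions, since, unlike the pairs handled bijectively earlier, $\rpop$ and $\top$ are governed by genuinely different traversals and no obvious involution presents itself. I would introduce the four-variable series
$$
G(x,q,u,v)=\sum_{T\in\DT} x^{|T|}\,q^{\omi(T)}\,u^{\rpop(T)}\,v^{\top(T)},
$$
the sum running over all di-sk trees including the empty one, where $x$ marks nodes. The theorem is then equivalent to the identity $G(x,q,u,v)=G(x,q,v,u)$, since the coefficient of $x^{n-1}q^{i}u^{j}v^{k}$ counts the trees in $\DT_n$ with $\omi=i$, $\rpop=j$, and $\top=k$.

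First I would record the two structural facts that make these statistics computable recursively. Writing a nonempty tree as $T=(L,r,R)$, the preorder run obeys $\top(T)=0$ when $r=\ominus$ and $\top(T)=1+\top(L)$ when $r=\oplus$, so $\top$ reads only the top of the spine. For $\rpop$, the spine decomposition of $T$ into its spine labels $s_1,\dots,s_m$ (from root down to iroot) and the right subtrees $R_1,\dots,R_m$ hanging off them yields the right-postorder word $w(T)=w(R_1)\cdots w(R_m)\,s_m\cdots s_1$, so $\rpop(T)$ is the length of the initial $\oplus$-run of this concatenation. The crucial observation is that the trees with $\rpop(T)=|T|$ coincide with those with $\top(T)=|T|$, namely the left paths of $\oplus$-nodes, which I will call \emph{positive}; their series is $1/(1-xuv)$, since $\omi=0$ and $\rpop=\top=|T|$ on them. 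The $\oplus$-run defining $\rpop$ propagates through a subtree $R_i$ precisely when $R_i$ is positive, and is otherwise interrupted inside $R_i$ at the value $\rpop(R_i)$.

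Next I would translate these facts into a system of functional equations for $G$, splitting off the root label. The decisive feature is that once the $\oplus$-run stops, $\top$ and $\rpop$ decouple: a root-$\ominus$ tree carries the factor $v^0$ while its $\rpop$ is read from $R$ (and from $L$ only through positive propagation), whereas a root-$\oplus$ tree whose right subtree is a nonempty $\ominus$-rooted tree has its $\rpop$ read entirely from $R$ while $\top$ is read from $L$. Consequently the equations involve the marginals $G(x,q,u,1)$, $G(x,q,1,v)$ and $G(x,q,1,1)$ together with the root-restricted series, and the positive-subtree propagation contributes explicit geometric factors such as $\frac{1}{1-xuv}$ and $\frac{xu}{1-xu}$. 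These marginals satisfy their own simpler closed equations, so the system is effectively linear in the unknown pieces of $G$ and can be solved to an explicit rational expression.

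The main obstacle is the final symmetry check: the system is \emph{not} manifestly invariant under $u\leftrightarrow v$, because root-$\oplus$ and root-$\ominus$ nodes, and hence the top-of-spine variable $v$ and the right-postorder variable $u$, enter in structurally different ways. I expect the symmetry to become visible only after solving and simplifying the rational expression for $G$, and the careful bookkeeping of the positive-propagation terms and the $v=1$/$u=1$ specializations is where the real work lies. As a cleaner alternative I would try to show directly that $G(x,q,v,u)$ satisfies the very same system—that is, exhibit a transformation of the equations realizing $u\leftrightarrow v$—and then invoke uniqueness of the solution as a formal power series in $x$; finding that transformation is itself the crux, but if found it yields the equidistribution, and the asserted unrefined symmetry of $(\rpop,\top)$ upon setting $q=1$, without grinding through the explicit solution.
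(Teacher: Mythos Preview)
Your proposal is correct and takes essentially the same approach as the paper: define the four-variable generating function tracking $(\omi,\rpop,\top)$, split by root label to obtain a system of functional equations involving the marginals, solve, and verify $G(x,q,u,v)=G(x,q,v,u)$ algebraically. The paper carries this out explicitly (with Maple), using the already-proved equidistribution $(\omi,\top)\sim(\omi,\iop)$ from Theorem~\ref{thm:di-sk} as a shortcut to the marginal $S(1,y)$, and then checking that the numerator of $S(x,y)-S(y,x)$ contains the factor $tz^2S^3+tz^2S^2+(1+t)zS+1-S$, which vanishes by the known cubic for $S$.
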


\begin{proof}
We will prove the equidistribution by using generating functions.
Let us consider the generating function
$$
S(x,y)=S(t,x,y;z):=1+\sum_{n\ge 1}z^n\sum_{T\in\DT_{n+1}}t^{\omi(T)}x^{\rpop(T)}y^{\top(T)}=1+S^{\oplus}(x,y)+S^{\ominus}(x,y),
$$
where $S^{\oplus}(x,y)$ (resp.~$S^{\ominus}(x,y)$) is the generating function for nonempty di-sk trees whose root is an $\oplus$-node (resp.~$\ominus$-node). For the sake of simplicity, we set $S=S(1,1)$. By the work in~\cite{flz}, the function $S$ satisfies the algebraic equation
\begin{equation}\label{S:des}
S=tz^2S^3+tz^2S^2+(1+t)zS+1.
\end{equation}
On the other hand, by Theorem~\ref{thm:di-sk} the two pairs $(\omi,\top)$ and $(\omi,\iop)$ are equidistributed over di-sk trees. Thus, the pair $(\omi,\top)$ over di-sk trees has the same distribution as the pair $(\des,\iar-1)$ over separable permutations and it follows from~\cite[Eq.~(5.4)]{flw} that 
\begin{equation}\label{S:top}
S(1,y)=\frac{S}{1+(1-y)zS}.
\end{equation}

Let $T=(T_1,r,T_2)$ be a nonempty di-sk tree, with $r=\oplus$ or $\ominus$ being the label of its root, $T_1$ and $T_2$ being its left subtree and right subtree, respectively. We need to consider two cases:
\begin{itemize}
\item {\bf Case~1: $r=\oplus$}. We further distinguish two cases.
\begin{enumerate}
\item $T_2=\emptyset$. 
\begin{itemize}
\item[a)] $T_1$ is a  di-sk tree (possibly empty) without any $\ominus$-node. This case contributes to $S^{\oplus}(x,y)$ the enumerator 
$$\frac{zxy}{1-zxy}.$$
\item[b)] Otherwise, $T_1$ is a  di-sk tree with at least one $\ominus$-node. This case contributes to $S^{\oplus}(x,y)$ the enumerator 
$$zy\biggl(S(x,y)-\frac{1}{1-zxy}\biggr).$$
\end{itemize}
\item $T_2\neq\emptyset$. This case contributes to $S^{\oplus}(x,y)$ the enumerator
$$zyS(1,y)S^{\ominus}(x,1).$$
\end{enumerate}
\item {\bf Case~2: $r=\ominus$}. We further distinguish two cases.
\begin{enumerate}
\item $T_2$ is a  di-sk tree (possibly empty) without any $\ominus$-node. This case contributes to $S^{\ominus}(x,y)$ the enumerator
$$
\frac{tzS(x,1)}{1-zx}.
$$
\item $T_2$ is a  di-sk tree with at least one $\ominus$-node. This case contributes to $S^{\ominus}(x,y)$ the enumerator 
$$tzS\biggl(S^{\oplus}(x,1)-\frac{zx}{1-zx}\biggr).$$
\end{enumerate}
\end{itemize}
Combining all the above cases results in a system of functional equations
\begin{equation}\label{sys:rpop}
\begin{cases}
\,\, S(x,y)=1+S^{\oplus}(x,y)+S^{\ominus}(x,y),\\
\,\, S^{\oplus}(x,y)=\frac{zxy}{1-zxy}+zy\biggl(S(x,y)-\frac{1}{1-zxy}\biggr)+zyS(1,y)S^{\ominus}(x,1),\\
\,\, S^{\ominus}(x,y)=\frac{tzS(x,1)}{1-zx}+tzS\biggl(S^{\oplus}(x,1)-\frac{zx}{1-zx}\biggr).
\end{cases}
\end{equation}
Solving this system of equations for $y=1$ (using Maple) yields
\begin{align}
 S(x,1)&=-(tz^2S^2+tz^2S+z-1)/A,\label{S:rpop}\\
 S^{\oplus}(x,1)&=(txz^2(zx-1)S^2+tz(1-zx)^2S+x(1-z)(1-zx)+tz(1-x))/B,\label{Sop:rpop}\\
S^{\ominus}(x,1)&=(z-1)tz/B\label{Som:rpop},
\end{align}
where 
\begin{align*}
A&:=(txz^3-tz^2)S^2+(txz^3-2tz^2)S+xz^2-tz-xz-z+1,\\
B&:=(1-zx)(tz^2(1-zx)S^2+tz^2(2-zx)S+tz+zx+z-xz^2-1).
\end{align*}
It follows from~\eqref{S:rpop} and~\eqref{S:top} that 
\begin{align*}
S(x,1)-S(1,x)=\frac{(z-1)(tz^2S^3+tz^2S^2+(1+t)zS+1-S)}{A(xzS-zS-1)}=0,
\end{align*}
in view of~\eqref{S:des}.  Equivalently, 
\begin{equation}\label{sym:S}
S(x,1)=S(1,x)=\frac{S}{1+(1-x)zS}.
\end{equation}
 Substituting~\eqref{sym:S},~\eqref{Sop:rpop} and~\eqref{Som:rpop} into~\eqref{sys:rpop} and solving this system of equations (using Maple) gives an expression for $S(x,y)$, which is a rational function in $S,t,z,x$ and $y$ (too complicated  to be reported here). It turns out that there is a factor 
 $$tz^2S^3+tz^2S^2+(1+t)zS+1-S,$$ which is zero in view of~\eqref{S:des}, in the numerator of the   difference $S(x,y)-S(y,x)$. This proves $S(x,y)=S(y,x)$, as desired. 
\end{proof}

In Theorems~\ref{thm:di-sk},~\ref{thm:transver} and~\ref{thm:rpop}, we have already proved three symmetries over di-sk trees: 
\begin{align*}
&(\omi,\top,\iop)\sim(\omi,\iop,\top),\\
&(\pop,\rpop)\sim(\rpop,\pop),\\
& (\omi,\top,\rpop)\sim(\omi,\rpop,\top).
\end{align*}
Recall that our proof of $\riop\sim\iop$ is essentially the application of the reverse-complement operation composed with the bijection $\eta$. Note that this composed map preserves the statistics $\top$ and $\omi$. This means we get the following symmetry over di-sk trees for free:
\begin{align*}
&(\omi,\top,\riop)\sim(\omi,\riop,\top).
\end{align*} 
Among the remaining six pairs taken from the equidistributed quintuple $$\riop\sim\iop\sim\top\sim\pop\sim\rpop,$$ our calculations suggest the following three more symmetric pairs. 
\begin{conj}\label{conj:sym}
Over di-sk trees, the following three symmetries hold:
\begin{align*}
&(\omi,\riop,\rpop)\sim(\omi,\rpop,\riop),\\
&(\omi,\iop,\rpop)\sim(\omi,\rpop,\iop),\\
& (\riop,\pop)\sim(\pop,\riop).
\end{align*}
\end{conj}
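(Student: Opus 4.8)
The plan is to attack all three symmetries by the generating-function machinery already deployed for Theorem~\ref{thm:rpop}, since each conjectured identity is precisely a symmetry in two marking variables of a generating function refined by $\omi$. For the first two identities I would study
$$
G(x,y)=1+\sum_{n\ge1}z^n\sum_{T\in\DT_{n+1}}t^{\omi(T)}x^{f(T)}y^{g(T)}
$$
with $(f,g)=(\riop,\rpop)$ and $(f,g)=(\iop,\rpop)$ respectively, and for the third the same object with $t=1$ and $(f,g)=(\riop,\pop)$. In each case I would split $G=1+G^{\oplus}+G^{\ominus}$ according to the root label and translate the decomposition $T=(T_1,r,T_2)$ into a closed system of functional equations, exactly in the spirit of~\eqref{sys:rpop}.

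The first task is to record how each statistic propagates through $T=(T_1,r,T_2)$. The statistic $\top$ is the easy one because it only reads the spine: $\top(T)=0$ for $r=\ominus$ and $\top(T)=1+\top(T_1)$ for $r=\oplus$. By contrast each of $\iop$, $\riop$, $\pop$ and $\rpop$ is governed by an initial $\oplus$-run that can \emph{spill across} a subtree. The inorder run of $\iop$ begins inside $T_1$ and reaches the root only when $\omi(T_1)=0$; the right-inorder run of $\riop$ begins inside $T_2$ and reaches the root only when $\omi(T_2)=0$; the right-postorder run of $\rpop$ begins inside $T_2$, passes to $T_1$ once $\omi(T_2)=0$, and reaches the root only when in addition $\omi(T_1)=0$; the postorder run of $\pop$ traverses $T_1$, then $T_2$, then the root. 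Capturing each such spill-over is exactly what produces the correction terms subtracting the $\ominus$-free series (the analogues of $1/(1-zxy)$ and $zx/(1-zx)$ in~\eqref{sys:rpop}), and I would derive these corrections configuration by configuration, introducing auxiliary series for $\ominus$-free trees so that the system closes.

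Once the systems are in hand, the endgame mirrors Theorem~\ref{thm:rpop}: solve at $y=1$ to express the specializations through $S$, substitute back, solve the full system with a computer-algebra system, and verify that the numerator of $G(x,y)-G(y,x)$ is divisible by $tz^2S^3+tz^2S^2+(1+t)zS+1-S$, which vanishes by~\eqref{S:des}. The hard part is the bookkeeping of the functional equations, not this final check. The essential difference from Theorem~\ref{thm:rpop} is that there the pair $(\top,\rpop)$ coupled the trivial spine statistic $\top$ with a \emph{single} spill-over run, whereas here \emph{both} members of every pair are spill-over statistics (one inorder-type, $\iop$ or $\riop$, and one postorder-type, $\pop$ or $\rpop$), so one must track two interacting initial $\oplus$-runs in the same tree and resolve, for each root/emptiness/$\ominus$-free case, how the two subtrees feed the two runs. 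Keeping this enlarged system simultaneously correct and solvable is the genuine obstacle.

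Finally, I would look for a bijective shortcut via the reverse-complement involution $R:=\eta\circ\rc\circ\eta^{-1}$, which swaps $\iop\leftrightarrow\riop$ while fixing $(\omi,\top)$, and via the map $\theta$ of Theorem~\ref{thm:transver}, which swaps $(\pop,\rpop)$. If $R$ could be shown to fix $\rpop$, then the first symmetry would follow from the second at no extra cost; more optimistically, controlled actions of $R$ and $\theta$ on the inorder- and postorder-type statistics might reduce all three identities to a single computation. Determining these actions is itself delicate — note that $R$ preserves $\top$ rather than exchanging it with $\rtop$, so $R$ is not the naive left--right mirror — so I would treat this only as a potential simplification rather than the primary route.
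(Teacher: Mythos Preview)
The statement you are attempting to prove is labeled a \emph{conjecture} in the paper and is left open: the paper supplies no proof. Its only comment is that Conjecture~\ref{conj:sym} ``may be proved by computing their generating functions similarly as the proof of Theorem~\ref{thm:rpop},'' while expressing a preference for combinatorial involutions. Your proposal is therefore not competing with any argument in the paper; it is precisely the route the authors themselves flag as plausible but do not pursue.

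As a plan it is sound in outline. You correctly isolate the qualitative difference from Theorem~\ref{thm:rpop}: there the pair $(\top,\rpop)$ coupled the trivially recursive spine statistic with a single spill-over run, whereas here every pair couples two spill-over runs, so the root decomposition must track two interacting initial $\oplus$-runs simultaneously. Your description of how $\iop$, $\riop$, $\pop$, $\rpop$ propagate through $T=(T_1,r,T_2)$ is accurate. What is missing is the actual execution: you have not written down the systems of functional equations, nor verified that they close over a finite set of auxiliary series, nor carried out the algebraic check that the numerator of $G(x,y)-G(y,x)$ is divisible by the relation~\eqref{S:des}. Since the paper itself only says the method \emph{may} work, this execution is the entire content of a proof, and until it is done the conjecture remains open. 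Your bijective remarks about $R=\eta\circ\rc\circ\eta^{-1}$ and $\theta$ are reasonable heuristics but, as you acknowledge, you have not established how $R$ acts on $\rpop$ or how $\theta$ acts on $\riop$, so they do not yet yield any reduction.
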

Although Conjecture~\ref{conj:sym} may be proved by computing their generating functions similarly as the proof of Theorem~\ref{thm:rpop},  combinatorial involution proofs are preferred. 

\subsection{Initial $\ominus$-nodes in di-sk trees}

In the previous section, we have considered eight different types of  tree  traversal and their associated statistics for the number of initial $\oplus$-nodes in di-sk trees. One could also consider the statistics of the number of initial $\ominus$-nodes with respect to the  eight types of tree traversal; see Tab.~\ref{eight-tree-omi} for the notations of these eight associated statistics. Since $\oplus$-nodes and $\ominus$-nodes are symmetry in di-sk trees, for  a fixed type of traversal,  the statistic of the number of initial $\oplus$-nodes is equidistributed with the statistic of  the number of initial $\ominus$-nodes. For instance, we have $\iop\sim\iom$ over $\DT_n$. Thus, by Theorem~\ref{thm:transver} we have five new Comtet statistics over di-sk trees
$$
\riom\sim\iom\sim\tom\sim\pom\sim\rpom.
$$
It would be interesting to investigate systematically the joint distribution for one of the five Comtet statistics above and one in~\eqref{five equal}. As one example, in the rest of this paper,  we aim to prove combinatorially that the Comtet pair $(\top,\iom)$ is symmetric over di-sk trees.

\begin{table}[bp]\caption{Eight types of tree traversal for a di-sk tree $T$ and their associated statistics for the number of initial $\ominus$-nodes}\label{eight-tree-omi}
\centering
\begin{tabular}{|c||c|c|c|c|}
\hline
Name & inorder & right inorder & preorder & right preorder \\
\hline
Rule & Left--Root--Right & Right--Root--Left & Root--Left--Right & Root--Right--Left \\
\hline
Stat & $\iom(T)$ & $\riom(T)$ & $\tom(T)$ & $\rtom(T)$ \\
\hline
Name & postorder & right postorder & level order & right level order \\
\hline
Rule & Left--Right--Root & Right--Left--Root & Left--Right--Next level & Right--Left--Next level \\
\hline
Stat & $\pom(T)$ & $\rpom(T)$ & $\lom(T)$ & $\rlom(T)$ \\
\hline
\end{tabular}
\end{table}

For $n\geq1$ and  $0\leq k,l\leq n-1$, let us consider the set 
\begin{equation}\label{omi-top}
\DT_n^{(k,l)}:=\{T\in\DT_n: \top(T)=k, \iom(T)=l\}
\end{equation}
and denote its cardinality by $s_n^{(k,l)}$. Interestingly, it turns out that the $n\times n$ matrix $M_n^{\top,\iom}$, with entry $s_n^{(i-1,j-1)}$ in row $i$ and column $j$, is upper anti-triangular and Hankel. The first values of $M_n^{\top,\iom}$ are 
$$
\begin{bmatrix}\label{tri:Sch2}
0& 1 \\
1 & 0 
\end{bmatrix},
\begin{bmatrix}\label{tri:Sch3}
1 & 1 & 1 \\
1 & 1  & 0\\
1 & 0 & 0
\end{bmatrix},
\begin{bmatrix}\label{tri:Sch4}
4 & 4 & 2&1 \\
4 & 2  & 1&0\\
2 & 1 & 0&0\\
1&0&0&0
\end{bmatrix},
\begin{bmatrix}\label{tri:Sch5}
17 & 16 & 8&3 &1\\
16 & 8  & 3&1&0\\
8 & 3& 1&0&0\\
3&1&0&0&0\\
1&0&0&0&0
\end{bmatrix},
\begin{bmatrix}\label{tri:Sch}
76 & 69 & 34&13 &4&1\\
69 & 34 & 13&4&1&0\\
34 & 13& 4&1&0&0\\
13&4&1&0&0&0\\
4&1&0&0&0&0\\
1&0&0&0&0&0
\end{bmatrix}.
$$
The upper anti-triangular property of $M_n^{\top,\iom}$ follows from the simple fact that $\top(T)+\iom(T)\leq n-1$ for each $T\in\DT_n$, while the Hankel property of $M_n^{\top,\iom}$ is a consequence of the following result.

\begin{theorem}\label{thm:bij2}
Let $\DT_n^{(k,l)}$ be defined in~\eqref{omi-top}. If $k\geq1$, then there exists a bijection $\psi:\DT_n^{(k,l)}\rightarrow \DT_n^{(k-1,l+1)}$. Consequently, the pair $(\comp,\idr)$ of double Comtet statistics is symmetric over $\S_n(2413,3142)$, where $\idr(\pi)$ denotes the length of the initial descending  run of a permutation $\pi$.
\end{theorem}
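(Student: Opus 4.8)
The plan is to reduce Theorem~\ref{thm:bij2} to its two genuinely combinatorial ingredients: first translate the pair $(\comp,\idr)$ on permutations into the pair $(\top,\iom)$ on di-sk trees, and then build $\psi$ recursively, isolating the one essential step where the root label must flip.

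For the translation, note that \eqref{minu:des} reads off the descent set of $\pi=\eta^{-1}(T)$ from the inorder label sequence of $T$: position $i$ is a descent exactly when the $i$th inorder node is an $\ominus$-node. Hence an initial descending run of length $m$ in $\pi$ corresponds to $\{1,\dots,m-1\}\subseteq\DES(\pi)$ with $m\notin\DES(\pi)$, i.e.\ to the first $m-1$ inorder nodes being $\ominus$ and the $m$th being $\oplus$. This gives $\idr(\pi)-1=\iom(\eta(\pi))$, the exact analogue of \eqref{eq:iop}. Together with Lemma~\ref{lem:sep2} ($\comp(\pi)-1=\top(\eta(\pi))$), a bijection $\psi\colon\DT_n^{(k,l)}\to\DT_n^{(k-1,l+1)}$ for every $k\ge1$ forces $s_n^{(k,l)}=s_n^{(k-1,l+1)}$, so by telescoping $s_n^{(k,l)}$ depends only on $k+l$ (the Hankel property); in particular $s_n^{(k,l)}=s_n^{(l,k)}$, which is precisely the symmetry of $(\top,\iom)$, hence of $(\comp,\idr)$, over $\S_n(2413,3142)$. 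Since the marginal then gives $\idr\sim\comp$, the statistic $\idr$ is Comtet, justifying the phrase ``double Comtet statistics''.

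To construct $\psi$ I would induct on $n$ and peel off the root. As $k\ge1$, the root of $T$ is an $\oplus$-node, so write $T=(T_1,\oplus,T_2)$; because the root sits immediately after $T_1$ in inorder and the spine descends to the left, one has $\top(T)=1+\top(T_1)$ and $\iom(T)=\iom(T_1)$, independently of $T_2$. Thus when $k\ge2$ the left subtree satisfies $\top(T_1)=k-1\ge1$ and $\iom(T_1)=l$, and I set $\psi(T)=(\psi(T_1),\oplus,T_2)$; this lands in $\DT_n^{(k-1,l+1)}$ and is manifestly invertible, reducing everything to the base case $k=1$.

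The base case $\psi\colon\DT_n^{(1,l)}\to\DT_n^{(0,l+1)}$ is the heart of the matter and the step I expect to be the main obstacle. Here the root must change from $\oplus$ to $\ominus$ while the initial $\ominus$-run (by inorder) grows by exactly one. The difficulty is that, unlike the transformation $\mathcal{L}$ driving $\phi$ in Theorem~\ref{thm:di-sk}, this step \emph{cannot} preserve $\omi$: already the matrix $M_4^{\top,\iom}$ shows that $\DT_4^{(1,0)}$ and $\DT_4^{(0,1)}$ contain different numbers of trees with a prescribed number of $\ominus$-nodes, so any such $\psi$ must relabel nodes rather than merely re-route left edges. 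Concretely I would perform a local surgery at the iroot region: locate the first $\oplus$-node by inorder (the $(l+1)$st inorder node, which is the root when $T_1$ is empty or an all-$\ominus$ left path, and lies inside $T_1$ otherwise), promote the top $\oplus$ into the new leading $\ominus$, and splice the right subtree $T_2$ back in so as to keep every $\ominus$-node free of an $\ominus$ right child. Controlling $\iom$ exactly will require splitting into the sub-cases $T_1=\emptyset$, $T_1$ an all-$\ominus$ left path, and $T_1$ neither --- precisely the cases that branch the recursion for $\iom$ under an $\ominus$-root --- and checking in each that the initial run lengthens by one and that the output is a legal di-sk tree; invertibility then follows by reversing the surgery. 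As a consistency check one could instead verify the cardinality identity $s_n^{(1,l)}=s_n^{(0,l+1)}$ by the generating-function method of Theorem~\ref{thm:rpop}, but only the combinatorial surgery produces the bijection $\psi$ the statement asks for.
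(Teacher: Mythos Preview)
Your translation of $(\comp,\idr)$ to $(\top,\iom)$ via Lemma~\ref{lem:sep1} and Lemma~\ref{lem:sep2}, and your derivation of the Hankel/symmetry property from the existence of $\psi$, are correct and match the paper exactly.

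Your recursive reduction for $k\ge 2$ is valid: writing $T=(T_1,\oplus,T_2)$, both $\top$ and $\iom$ are read off from $T_1$ alone, so $\psi(T)=(\psi(T_1),\oplus,T_2)$ lands in $\DT_n^{(k-1,l+1)}$ and is invertible. The paper does \emph{not} do this; it gives a single cut-and-paste construction that works for all $k\ge 1$ at once. Your reduction is a legitimate simplification, but it does not buy much: the $k=1$ case you are left with is essentially as hard as the general case, because the structural obstruction (the root must flip sign while the di-sk constraint forbids naive relabeling of the root) is the same.

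The genuine gap is exactly the base case you flag. Your sketch (``promote the top $\oplus$ into the new leading $\ominus$'' and ``splice $T_2$ back in'') does not specify a map, and your proposed case split on $T_1$ does not address the real difficulty, which lies in $T_2$: if $T_2\neq\emptyset$ its root is $\ominus$, so once the root of the output is $\ominus$ you cannot reattach $T_2$ as a right subtree anywhere without violating the di-sk rule. The paper's key idea, which your proposal is missing, is \emph{conjugation} (swapping all labels) of a carefully chosen piece of the tree. Concretely, the paper extracts the root together with $T_2$, rearranges it by moving the left subtree of the lowest $\ominus$-node on the spine of $T_2$ to sit above the old root, conjugates this entire block, and then inserts the conjugated block back into $T_1$ between the first $\oplus$-node $v_1$ (by inorder) and its left child $u_1$. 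The conjugation is what simultaneously turns the $\oplus$-root into an $\ominus$-root and makes the reattachment legal; the rearrangement before conjugating is what controls $\iom$ precisely. Without this mechanism your surgery cannot be completed, and the sub-case analysis you propose will not produce an explicit bijection.
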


In order to facilitate our construction of $\psi$, we define three basic operations, called {\em conjugation, insertion}, and {\em extraction}, for di-sk trees. 

\begin{Def}[Conjugation]
Given a di-sk tree $T\in\DT_n$, we reverse the labels of all of its nodes, i.e., $\oplus$-nodes become $\ominus$-nodes and $\ominus$-nodes become $\oplus$-nodes. This yields a new di-sk tree in $\DT_n$ that we call the {\em conjugate} of $T$. Denote by    $\widebar{T}$ the conjugate of $T$.
\end{Def}


\begin{figure}
\begin{tikzpicture}[scale=0.3]
\node at (9.5,8.5) {$\circ$};
\node at (7.5,6.5) {$\circ$};
\node at (5.5,4.5) {$\circ$};
\node at (3.5,2.5) {$\circ$};

\draw[-,dashed] (1,0) to (3.33,2.33);
\draw[-] (3.7,2.7) to (5.33,4.33);
\draw[-,dashed] (5.7,4.7) to (7.33,6.33);
\draw[-] (7.7,6.7) to (9.33,8.33);
\draw[-,dashed] (9.7,8.7) to (12,11);

\draw[-,dashed] (3.67,2.33) to (6,0);
\draw[-,dashed] (5.67,4.33) to (8,2);
\draw[-,dashed] (7.67,6.33) to (10,4);
\draw[-,dashed] (9.67,8.33) to (12,6);
\draw[-,dashed] (7,11) to (9.3,8.7);

\node at (2.75,3.25) {$a$};
\node at (4.75,5.25) {$c$};
\node at (6.75,7.25) {$d$};
\node at (9.5,10) {$b$};

\node at (14.5,5.5) {$\stackrel{\text{extract}}{\longrightarrow}$};
\node at (20.5,6.5) {$\circ$};
\node at (18.5,4.5) {$\circ$};

\draw[-,dashed] (18.7,4.7) to (20.33,6.33);

\draw[-,dashed] (18.67,4.33) to (21,2);
\draw[-,dashed] (20.67,6.33) to (23,4);

\node at (17.75,5.25) {$c$};
\node at (19.75,7.25) {$d$};

\node at (26.5,5.5) {$\Longrightarrow$};
\node at (33.5,6.5) {$\circ$};
\node at (31.5,4.5) {$\circ$};

\draw[-,dashed] (29,2) to (31.33,4.33);
\draw[-] (31.7,4.7) to (33.33,6.33);
\draw[-,dashed] (33.7,6.7) to (36,9);

\draw[-,dashed] (31.67,4.33) to (34,2);
\draw[-,dashed] (33.67,6.33) to (36,4);
\draw[-,dashed] (31,9) to (33.3,6.7);

\node at (30.75,5.25) {$a$};
\node at (33.5,8) {$b$};

\node at (6,-1.3) {$T_3$};
\node at (19.75,-1.3) {$T_2$};
\node at (32,-1.3) {$T_1=T_3\backslash T_2$};


\node at (7.5,20.5) {$\circ$};
\node at (5.5,18.5) {$\circ$};

\draw[-,dashed] (3,16) to (5.33,18.33);
\draw[-] (5.7,18.7) to (7.33,20.33);
\draw[-,dashed] (7.7,20.7) to (10,23);

\draw[-,dashed] (5.67,18.33) to (8,16);
\draw[-,dashed] (7.67,20.33) to (10,18);
\draw[-,dashed] (5,23) to (7.3,20.7);

\node at (4.75,19.25) {$a$};
\node at (7.5,22) {$b$};
\node at (6,13) {$T_1$};
\node at (19.75,13) {$T_2$};
\node at (33,13) {$T_3=T_1/T_2(a,b)$};

\node at (14.5,19.5) {$\stackrel{\text{insert}}{\longleftarrow}$};
\node at (20.5,20.5) {$\circ$};
\node at (18.5,18.5) {$\circ$};

\draw[-,dashed] (18.7,18.7) to (20.33,20.33);

\draw[-,dashed] (18.67,18.33) to (21,16);
\draw[-,dashed] (20.67,20.33) to (23,18);

\node at (17.75,19.25) {$c$};
\node at (19.75,21.25) {$d$};

\node at (26.5,19.5) {$\Longrightarrow$};
\node at (35.5,22.5) {$\circ$};
\node at (33.5,20.5) {$\circ$};
\node at (31.5,18.5) {$\circ$};
\node at (29.5,16.5) {$\circ$};

\draw[-,dashed] (27,14) to (29.33,16.33);
\draw[-] (29.7,16.7) to (31.33,18.33);
\draw[-,dashed] (31.7,18.7) to (33.33,20.33);
\draw[-] (33.7,20.7) to (35.33,22.33);
\draw[-,dashed] (35.7,22.7) to (38,25);

\draw[-,dashed] (29.67,16.33) to (32,14);
\draw[-,dashed] (31.67,18.33) to (34,16);
\draw[-,dashed] (33.67,20.33) to (36,18);
\draw[-,dashed] (35.67,22.33) to (38,20);
\draw[-,dashed] (33,25) to (35.3,22.7);

\node at (28.75,17.25) {$a$};
\node at (30.75,19.25) {$c$};
\node at (32.75,21.25) {$d$};
\node at (35.5,24) {$b$};

\end{tikzpicture}
\caption{Insertion and extraction.}
\label{fig:ins-ext}
\end{figure}
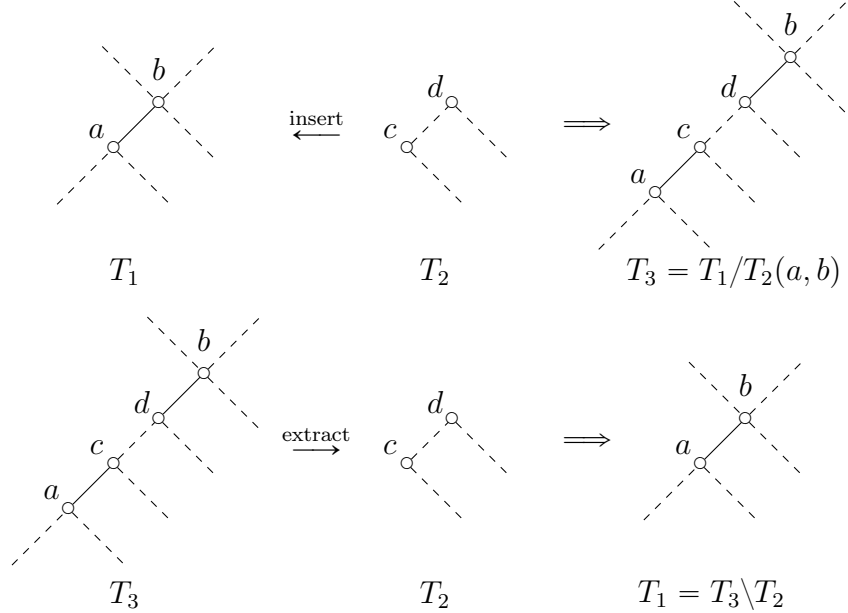

\begin{Def}[Insertion and extraction]
Given two trees $T_1,T_2$, suppose $a,b$ are two nodes of $T_1$ such that $a$ is the left child of $b$, and suppose $c$ (resp.~$d$) is the iroot (resp.~root) of $T_2$. We derive a new tree, say $T_3$, by first deleting the edge between $a$ and $b$, then attaching the subtree rooted at $a$ to the left of $c$, and attaching the subtree rooted at $d$ (now contains $a$) to the left of $b$. This operation is illustrated in Fig.~\ref{fig:ins-ext} (wherein $\circ$ does not indicate the label) and is called the {\em insertion of $T_2$ into $T_1$ at $a,b$}. We denote $T_3=T_1/T_2(a,b)$. Conversely, suppose $T_3$ and $T_2$ (with root $d$ and iroot $c$) are two trees such that $T_2$ can be embedded in $T_3$ satisfying
\begin{itemize}
	\item[i.] $d$ is the left child of certain node $b\in T_3$;
	\item[ii.] if we denote the left child of $c$ in $T_3$ as $a$, then the two edges $ca$ and $bd$ are the only edges connected to $T_2$ but not contained in $T_2$.
\end{itemize}
We derive a new tree, say $T_1$, from $T_3$ by first deleting the edges $ca$ and $bd$, then connecting $a$ and $b$ with a left edge. This operation is also illustrated in Fig.~\ref{fig:ins-ext} and is called the {\em extraction of $T_2$ from $T_3$}. We denote $T_1=T_3\backslash T_2$.
\end{Def}

\begin{remark}
To make the above definition applicable in more situations, we allow either $a$ or $b$ to be the empty node $\emptyset$. We explain here the meaning of such special cases for the insertion, while the extraction should be understood similarly. The meaning of $T_1/T_2(\emptyset,b)$ should be clear. For the case of $b=\emptyset$, $a$ must then be the root of $T_1$, and $T_1/T_2(a,\emptyset)$ is the tree rooted at $d$, with $a$ attached to $c$ by a left edge.
\end{remark}

Now we are ready for the proof of Theorem~\ref{thm:bij2}. 

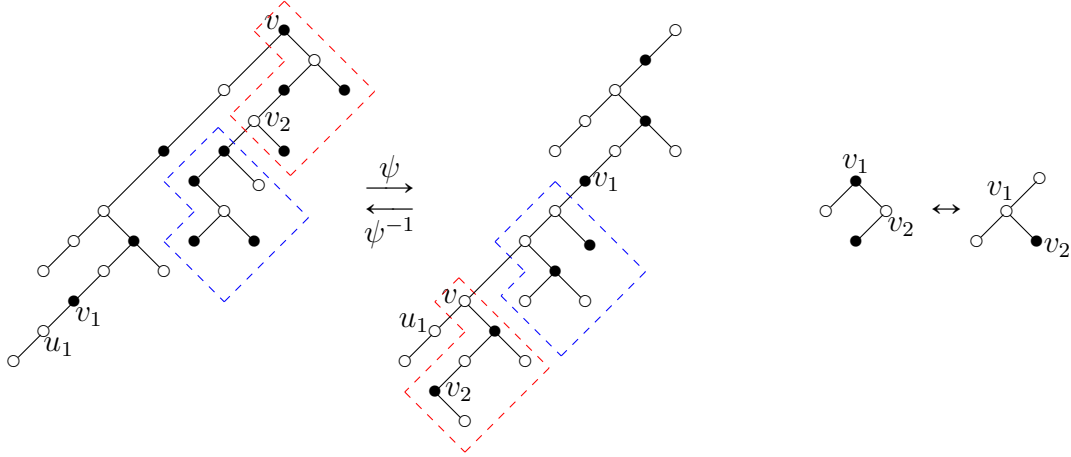
\begin{figure}
\begin{tikzpicture}[scale=0.4]
\draw[-] (2.15,3.15) to (2.9,3.9);
\draw[-] (3.15,4.15) to (4.9,5.9);
\draw[-] (5.15,6.15) to (6,7);
\draw[-] (8.15,7.15) to (8.9,7.9);
\draw[-] (8.15,9.15) to (8.9,9.9);
\draw[-] (9.15,10.15) to (9.9,10.9);
\draw[-]  (10.15,11.15) to (11,12);
\draw[-]  (11.9,12.9) to (11.15,12.15);
\draw[-] (3.15,6.15) to (3.9,6.9);
\draw[-] (4.15,7.15) to (4.9,7.9);
\draw[-] (5.15,8.15) to (8.9,11.9);
\draw[-] (9.15,12.15) to (10.9,13.9);

\draw[-] (6.85,6.15) to (5.1,7.9);
\draw[-] (10,7) to (9.1,7.9);
\draw[-] (8,9) to (8.85,8.15);
\draw[-] (10,9) to (9.1,9.9);
\draw[-]  (10.13,10.87) to (10.85,10.15);
\draw[-]  (12.1,12.9) to (12.85,12.15);
\draw[-]   (11,14) to (11.85,13.15);

\node at (2,3) {$\circ$};
\node at (3,4) {$\circ$};
\node at (3.5,3.5) {$u_1$};
\node at (4,5) {$\bullet$};
\node at (4.5,4.5) {$v_1$};
\node at (5,6) {$\circ$};
\node at (6,7) {$\bullet$};
\node at (7,6) {$\circ$};
\node at (8,7) {$\bullet$};
\node at (9,8) {$\circ$};
\node at (10,7) {$\bullet$};
\node at (8,9) {$\bullet$};
\node at (9,10) {$\bullet$};
\node at (10.15,8.85) {$\circ$};
\node at (10,11) {$\circ$};
\node at (10.8,11) {$v_2$};
\node at (11,10) {$\bullet$};
\node at (11,12) {$\bullet$};
\node at (12,13) {$\circ$};
\node at (13,12) {$\bullet$};
\node at (11,14) {$\bullet$};
\node at (10.6,14.3) {$v$};
\node at (9,12) {$\circ$};
\node at (7,10) {$\bullet$};
\node at (5,8) {$\circ$};
\node at (4,7) {$\circ$};
\node at (3,6) {$\circ$};

\red{\draw[-,dashed] (11,15) to (10,14);}
\red{\draw[-,dashed] (10,14) to (11,13);}
\red{\draw[-,dashed] (11,13) to (9.2,11.2);}
\red{\draw[-,dashed] (9.2,11.2) to (11.2,9.2);}
\red{\draw[-,dashed] (11.2,9.2) to (14,12);}
\red{\draw[-,dashed] (11,15) to (14,12);}

\blue{\draw[-,dashed] (8.8,10.8) to (11.8,7.8);}
\blue{\draw[-,dashed] (8.8,10.8) to (7,9);}
\blue{\draw[-,dashed] (7,9) to (8,8);}
\blue{\draw[-,dashed] (8,8) to (7,7);}
\blue{\draw[-,dashed] (7,7) to (9,5);}
\blue{\draw[-,dashed] (9,5) to (11.8,7.8);}

\red{\draw[-,dashed] (16.8,5.8) to (19.8,2.8);}
\red{\draw[-,dashed] (16.8,5.8) to (16,5);}
\red{\draw[-,dashed] (16,5) to (17,4);}
\red{\draw[-,dashed] (17,4) to (15,2);}
\red{\draw[-,dashed] (15,2) to (17,0);}
\red{\draw[-,dashed] (17,0) to (19.8,2.8);}

\blue{\draw[-,dashed] (20,9) to (18,7);}
\blue{\draw[-,dashed] (18,7) to (19,6);}
\blue{\draw[-,dashed] (19,6) to (18.2,5.2);}
\blue{\draw[-,dashed] (18.2,5.2) to (20.2,3.2);}
\blue{\draw[-,dashed] (20.2,3.2) to (23,6);}
\blue{\draw[-,dashed] (20,9) to (23,6);}

\node at (14.5,8.7) {$\longrightarrow$};
\node at (14.5,9.4) {$\psi$};
\node at (14.5,8) {$\longleftarrow$};
\node at (14.5,7.3) {$\psi^{-1}$};
\draw[-] (16,2) to (16.9,2.9);
\draw[-] (17.15,3.15) to (18,4);
\draw[-] (19.15,5.15) to (20,6);
\draw[-] (15.15,3.15) to (15.9,3.9);
\draw[-] (16.15,4.15) to (16.9,4.9);
\draw[-] (17.15,5.15) to (18.9,6.9);
\draw[-] (19.15,7.15) to (19.9,7.9);
\draw[-] (20.15,8.15) to (21.9,9.9);
\draw[-] (22.15,10.15) to (23,11);
\draw[-] (20.15,10.15) to (20.9,10.9);
\draw[-] (21.15,11.15) to (21.9,11.9);
\draw[-] (22.15,12.15) to (23.9,13.9);

\draw[-] (16.85,1.15) to (16,2);
\draw[-] (18.85,3.15) to (17.1,4.9);
\draw[-] (20.85,5.15) to (19.1,6.9);
\draw[-] (21,7) to (20.1,7.9);
\draw[-] (23.85,10.15) to (22.1,11.9);

\node at (24,14) {$\circ$};
\node at (23,13) {$\bullet$};
\node at (22,12) {$\circ$};
\node at (21,11) {$\circ$};
\node at (20,10) {$\circ$};
\node at (23,11) {$\bullet$};
\node at (22,10) {$\circ$};
\node at (21,9) {$\bullet$};
\node at (21.7,9) {$v_1$};
\node at (20,8) {$\circ$};
\node at (19,7) {$\circ$};
\node at (17,5) {$\circ$};
\node at (16.5,5.2) {$v$};
\node at (16,4) {$\circ$};
\node at (15.3,4.3) {$u_1$};
\node at (15,3) {$\circ$};
\node at (24,10) {$\circ$};
\node at (21.15,6.85) {$\bullet$};
\node at (20,6) {$\bullet$};
\node at (19,5) {$\circ$};
\node at (18,4) {$\bullet$};
\node at (17,3) {$\circ$};
\node at (16,2) {$\bullet$};
\node at (16.8,2) {$v_2$};
\node at (21,5) {$\circ$};
\node at (19,3) {$\circ$};
\node at (17,1) {$\circ$};

\draw[-] (30,7) to (30.9,7.9);
\draw[-] (29.15,8.15) to (30,9);

\draw[-] (30.85,8.15) to (30,9);

\node at (30,9) {$\bullet$};\node at (30,9.6) {$v_1$};
\node at (29,8) {$\circ$};
\node at (31,8) {$\circ$};\node at (31.5,7.5) {$v_2$};
\node at (30,7) {$\bullet$};

\node at (33,8) {$\leftrightarrow$};
\draw[-] (34.15,7.15) to (34.9,7.9);
\draw[-] (35.15,8.15) to (36,9);

\draw[-] (36,7) to (35.12,7.88);

\node at (36.1,9.1) {$\circ$};
\node at (35,8) {$\circ$};\node at (34.8,8.7) {$v_1$};
\node at (34,7) {$\circ$};
\node at (36,7) {$\bullet$};\node at (36.7,6.8) {$v_2$};

\end{tikzpicture}
\caption{Two examples of $\psi$: a general one and a small one \label{bij:psi}}
\end{figure}

\begin{proof}[{\bf Proof of Theorem~\ref{thm:bij2}}]
For each tree $T\in\DT_n^{(k,l)}$, since $k\geq1$, we can write $T=(T_1,v,T_2)$ with $v=\oplus$. We perform the following {\em cut-and-paste} procedure to get $\psi(T)$: 
\begin{itemize}
\item[Step 1] Let $v_1$ be the first $\oplus$-node (by inorder) of $T$ and $u_1$ be its left child (if $v_1$ has no left child then set $u_1=\emptyset$). Now if $T_2=\emptyset$, set $T^*=(\emptyset,\ominus,\emptyset)$ and jump to Step 3.

\item[Step 2] Otherwise $T_2\neq\emptyset$ and the root of $T_2$ must be an $\ominus$-node since $T$ is a di-sk tree. Now denote the lowest $\ominus$-node on the spine of $T_2$ as $v_2$, and denote the left subtree (possibly empty) of $v_2$ as $T_3$. We set $T^*$ to be the {\bf conjugate} of the following tree
$$((T\backslash T_1)\backslash T_3)/T_3(v,\emptyset)$$

\item[Step 3] If $v_1$ coincides with $v$ (see a small example on right side of Fig.~\ref{bij:disk}), then take $\psi(T)=T_1/T^*(u_1,\emptyset)$. Otherwise, we take $\psi(T)=T_1/T^*(u_1,v_1)$.
\end{itemize}
It should be clear from our construction that $\psi(T)\in\DT_n^{(k-1,l+1)}$ indeed. See Fig.~\ref{bij:disk}  (on the left side) for an illustration of $\psi$. 

It remains to show that $\psi$ is invertible. We will construct its inverse $\psi^{-1}$ explicitly. Given a di-sk tree $T\in\DT_n^{(k-1,l+1)}$, we perform the following {\em inverse  cut-and-paste} procedure to get $\psi^{-1}(T)$: 
\begin{itemize}
\item[Step 1] 
Since $T\in\DT_n^{(k-1,l+1)}$, we have $\iom(T)=l+1$, so we can find the $(l+1)$-th (by inorder) node, which is an $\ominus$-node and denoted $v$. Let $v_2$ be the $(l+2)$-th node (by inorder), which is an $\oplus$-node. In the special case of $l+1=n-1$, we set $v_2=\emptyset$. If $v_2$ is not a descendent of $v$ (including the case $v_2=\emptyset$), set $T^*=(\emptyset,v,\emptyset)$, $\widetilde{T}=\widebar{T^*}$, and jump to Step 3.
\item[Step 2] 
Otherwise $v_2$ must be a descendent of $v$. Now starting with $v$, we find the maximal chain of consecutive $\ominus$-nodes connected by left edges: $w_1=v, w_2,\ldots,w_m$, such that $w_m$ is the root of $T$ or $w_m$ has the right parent which is an $\oplus$-node. (Note that $w_m$ cannot have an $\oplus$-node as its left parent, since this will contradict with the fact that $v$ is the $(l+1)$-th initial $\ominus$-node). Let $T^*$ (resp.~$T_1$) be the tree having $w_m$ and $v$ (resp.~$w_2$) as its root and iroot, respectively. In the special case of $m=1$, simply take $T_1=\emptyset$. Furthermore, we let $\widetilde{T}$ be the {\bf conjugate} of the following tree
$$(T^*\backslash T_1)/T_1(\emptyset,v_2).$$

\item[Step 3] 
Let $w$ be the root of $T$. If $w$ coincides with $v$, then take $\psi^{-1}(T)=(T\backslash T^*)/\widetilde{T}(u_1,\emptyset)$, where $u_1$ is the left child of $v$ in $T$. Otherwise, we take
$$\psi^{-1}(T)=(T\backslash T^*)/\widetilde{T}(w,\emptyset).$$

\end{itemize}
It is routine to check that $\psi$ and $\psi^{-1}$ are inverse to each other and so $\psi$ is indeed  a bijection. 

By Lemmas~\ref{lem:sep1} and~\ref{lem:sep2}, the pair $(\comp,\idr)$ of Comtet statistics over $\S_n(2413,4213)$ is equidistributed with the pair $(\top,\iom)$ over $\DT_n$. Since $\psi:\DT_n^{(k,l)}\rightarrow \DT_n^{(k-1,l+1)}$ is a bijection, the pair $(\top,\iom)$ is symmetry over $\DT_n$ and so does the pair  $(\comp,\idr)$ over $\S_n(2413,4213)$. 
\end{proof}

\section{Concluding remarks}
\label{sec:fin}

The main achievement of this paper is the construction of a combinatorial bijection on di-sk trees that  proves the equidistribution of  two quintuples $(\LMAX,\LMIN,\DESB,\iar,\comp)$ and $(\LMAX,\LMIN,\DESB,\comp,\iar)$  over separable permutations.
At this point, we would like to pose several open problems.
\begin{?}
Our proof of the symmetry of  $(\rpop,\top)$ in Theorem~\ref{thm:rpop} is purely  algebraic, can one find a direct bijective proof (probably in the same spirit as $\psi$ constructed in Theorem~\ref{thm:bij2})? Could the three symmetries in Conjecture~\ref{conj:sym} be proved bijectively? 
\end{?}

The three Comtet statistics $\riop$, $\iop$ and $\top$ in Theorem~\ref{thm:transver} have  interpretations in terms of natural statistics over separable permutations under the bijection $\eta$. This makes us wonder whether there are natural interpretations of $\pop$ and $\rpop$ in terms of separable permutations. 
\begin{?}
Define explicitly two statistics, say $\st$ and $\st'$, for every separable permutation $\pi$, such that
$$\st(\pi)-1 = \pop(\eta(\pi)),\quad \text{and} \quad \st'(\pi)-1 = \rpop(\eta(\pi)).$$
In view of \eqref{eq:iop} and \eqref{five equal}, such two statistics are (new?) Comtet statistics over separable permutations. Similar question can be asked for the statistics $\pom$ and $\rpom$.
\end{?}
\begin{?}
Sitting at the heart of our proof of Theorem~\ref{thm:sep:sym} is di-sk tree, we need the mapping $\eta$ to transform the results back and forth between separable permutations and di-sk trees. This is reminiscent of Rubey's proof \cite{rub} of the equidistribution of $(\LMAX,\iar)$ and $(\LMAX,\comp)$ on $321$-avoiding permutations using Dyck paths, where Krattenthaler's bijection plays the role of $\eta$. So one may ask, is there a way to bypass the use of di-sk tree and prove Theorem~\ref{thm:sep:sym} directly on permutations? This has been done in our previous work \cite{flw} for the case of $321$-avoiding permutations.
\end{?}

Many classical permutation statistics, such as {\em Eulerian} statistics, {\em Mahonian} statistics or {\em Stirling} statistics, have been extensively investigated in the literature (see the excellent book exposition~\cite{kit} of Kitaev) not only on permutations avoiding ordinary  patterns, but also on permutations avoiding consecutive patterns or the more general vincular patterns. It would be interesting to explore systematically the distributions of the two Comtet statistics, $\iar$ and $\comp$, on permutations avoiding vincular patterns.

\section*{Acknowledgement}
We thank Martin Rubey for sharing with us his observation that  the set-valued statistic $\LMIN$ could be added to Theorem~1.2.
The second author was supported
by the National Science Foundation of China grants 11871247 and the project of Qilu Young Scholars of Shandong University.


\begin{thebibliography}{99}

\bibitem{adin} R. Adin, E. Bagno, Y. Roichman, Block decomposition of permutations and Schur-positivity, J. Algebraic Combin. {\bf47} (2018), 603--622.

\bibitem{bon} M.~B\'ona, {\em Combinatorics of Permutations}, Chapman \& Hall/CRC, Boca Raton, 2004.

\bibitem{CK} A.~Claesson and S.~Kitaev, Classification of bijections between $321$-and $132$-avoiding permutations, S\'em. Lothar. Combin., {\bf 60} (2008), B60d, 30 pp.



\bibitem{com} L. Comtet, {\em Advanced Combinatorics: The Art of Finite and Infinite Expansions}, D. Reidel Publishing Co., Dordrecht, 1974.

\bibitem{cor} S. Corteel, M. Martinez, C.D. Savage and M. Weselcouch, Patterns in inversion sequences I,  Discrete Math. Theor. Comput. Sci., {\bf18} (2016),  $\#2$.



\bibitem{EP} S. Elizalde and I. Pak, Bijections for refined restricted permutations, J. Combin. Theory Ser. A {\bf105} (2004), 207--219.

\bibitem{FS} D. Foata, M.-P. Sch\"utzenberger, Th\'eorie g\'eom\'etrique des polyn\^omes eul\'eriens, Lecture Notes in Math., vol. 138, Springer-Verlag, Berlin, 1970.



\bibitem{flz} S. Fu, Z. Lin and J. Zeng, Two new unimodal descent polynomials, Discrete Math., {\bf341} (2018),  2616--2626.

\bibitem{flw} S. Fu, Z. Lin and Y. Wang, Refined Wilf-equivalences by Comtet statistics, \href{https://arxiv.org/abs/2009.04269}{arXiv:2009.04269}.

\bibitem{fw} S.~Fu and Y.~Wang, Bijective proofs of recurrences involving two Schr\"oder triangles, European J. Combin. {\bf86} (2020), 103077.

\bibitem{kit} S.~Kitaev, {\em Patterns in permutations and words}, Springer Science \& Business Media, 2011. 


\bibitem{kra}C. Krattenthaler, Permutations with restricted patterns and Dyck paths, 
Special issue in honor of Dominique Foata's 65th birthday, Adv. in Appl. Math., {\bf27} (2001), 510--530.

\bibitem{LK} Z.~Lin, D.~Kim, A sextuple equidistribution arising in pattern avoidance, J. Combin. Theory Ser. A {\bf 155} (2018), 267--286.

\bibitem{oeis} OEIS Foundation Inc., The On-Line Encyclopedia of Integer Sequences,  \href{http://oeis.org}{http://oeis.org}, 2020.

\bibitem{pet} T.K.~Petersen, Eulerian numbers. With a foreword by Richard Stanley. Birkh\"auser Advanced Texts: Basler Lehrb\"ucher. Birkh\"auser/Springer, New York, 2015.

\bibitem{rub}M. Rubey, An involution on Dyck paths that preserves the rise composition and interchanges the number of returns and the position of the first double fall, S\'em. Lothar. Combin., Art. B77f, 4 pp.

\bibitem{shapi} L. Shapiro and A.B. Stephens, Bootstrap percolation, the Schr\"oder numbers, and the $N$-kings problem, SIAM J. Discrete Math., {\bf4} (1991), 275--2802.

\bibitem{shapi2} L. Shapiro and R. Sulanke, Bijections for the Schr\"oder numbers, Math. Mag., {\bf73} (2000), 369--376. 

\bibitem{SS} R.~Simion, F.~Schmidt, Restricted permutations, European J. Combin. {\bf6} (1985), 383--406.


\bibitem{sk} Z.~Stankova, Forbidden subsequences, Discrete Math. {\bf132} (1994), 291--316.


\bibitem{stan} R.P. Stanley, {\em Enumerative Combinatorics, vol. 2}, Cambridge Stud. Adv. Math., vol. 62, Cambridge University Press, Cambridge, 1999.


%


\bibitem{wes} J.~West, Generating trees and the Catalan and Schr\"oder numbers, Discrete Math. {\bf146} (1995), 247--262.

\end{thebibliography}
\end{document}